\newtheorem{defn}{{\bf Definition}}[section]
\newtheorem{eg}[defn]{{\bf Example}} \newtheorem{lemma}[defn]{{\bf
Lemma}} \newtheorem{prop}[defn]{{\bf Proposition}}
 \newtheorem{cor}[defn]{{\bf
Corollary}}
\font\bbb=msbm10 scaled\magstep1
\newcommand{\FF}{\mbox{\bbb F}} 
\newcommand{\QQ}{\mbox{\bbb Q}} 
\newcommand{\ZZ}{\mbox{\bbb Z}} 
\font\bbb=msbm8 scaled\magstep1
\newcommand{\TPSS}{S^{\hspace{.2mm}3} \mbox{$\times
\hspace{-2.8mm}_{-}$} \, S^{\hspace{.1mm}1}}
\newcommand{\TPSSD}{S^{\hspace{.2mm}d-1} \mbox{$\times
\hspace{-2.8mm}_{-}$} \, S^{\hspace{.1mm}1}}
 \newcommand{\TPPSS}{\kern.24em \rule width.08em height1.5ex
depth-.08ex \kern-.36em \times}
\newcommand{\Lk}[2]{{\rm Lk}_{#1}(#2)} \newcommand{\st}[2]{{\rm
st}_{#1}(#2)}
\begin{document}

\title{\bf Tight triangulations of some 4-manifolds }
\author{{\bf Basudeb Datta} and {\bf Nitin Singh}}

\date{}

\maketitle

\vspace{-5mm}

\noindent {\small Department of Mathematics, Indian Institute of
Science, Bangalore 560\,012, India.$^1$}

\footnotetext[1]{{\em E-mail addresses:} dattab@math.iisc.ernet.in
(B. Datta), nitin@math.iisc.ernet.in (N. Singh).}

\begin{center}
\date{August 24, 2012}
\end{center}

\hrule

\bigskip

\noindent {\bf Abstract.}

\smallskip

{\small Walkup's class ${\cal K}(d)$ consists of the
$d$-dimensional simplicial complexes all whose vertex links are
stacked $(d-1)$-spheres. According to a result of Walkup, the face
vector of any triangulated 4-manifold $X$ with Euler
characteristic $\chi$ satisfies $f_1 \geq 5f_0 - \frac{15}{2}
\chi$, with equality only for $X \in {\cal K}(4)$. K\"{u}hnel
observed that this implies $f_0(f_0 - 11) \geq -15\chi$, with
equality only for 2-neighborly members of ${\cal K}(4)$. For $n =
6, 11$ and 15, there are triangulated 4-manifolds with $f_0=n$ and
$f_0(f_0 - 11) = -15\chi$. In this article, we present
triangulated 4-manifolds with $f_0 = 21, 26$ and $41$ which
satisfy $f_0(f_0 - 11) = -15\chi$. All these triangulated
manifolds are tight and strongly minimal. }

\bigskip

\noindent {\small {\em MSC 2000\,:} 57Q15, 57R05.

\noindent {\em Keywords:} Stacked sphere; Tight triangulation;
Strongly minimal triangulation.

}

\bigskip

\hrule

\section{Introduction}

Walkup's class ${\cal K}(d)$ consists of the $d$-dimensional
simplicial complexes all whose vertex links are stacked
$(d-1)$-spheres. Kalai showed that for $d\geq 4$, all
connected members of ${\cal K}(d)$ are obtained from stacked
$d$-spheres by finitely many elementary handle additions
(cf. Proposition \ref{P3} below).
According to a result of Walkup \cite{wa}, the face vector $(f_0,
f_1, f_2, f_3, f_4)$ of any triangulated 4-manifold $X$ with Euler
characteristic $\chi$ satisfies $f_1 \geq 5f_0 - \frac{15}{2}
\chi$, with equality only for $X \in {\cal K}(4)$. K\"{u}hnel
\cite{ku} observed that this implies $f_0(f_0 - 11) \geq -15\chi$,
with equality only for 2-neighborly members of ${\cal K}(4)$.
Clearly, for the equality, $f_0 \equiv 0, 5, 6, 11$ (mod 15). For
$n = 6, 11$ and 15, there are such triangulated manifolds with
$f_0=n$, namely, the 6-vertex standard 4-sphere $S^{\,4}_6$, the
unique 11-vertex triangulation of $S^{\,3} \times S^1$ of K\"uhnel
and the 15-vertex triangulation of $(\TPSS)^{\#3}$ obtained by
Bagchi and Datta \cite{bd10}. Recently, the second author
\cite{si} found ten 15-vertex triangulations of $(S^{\,3} \times
S^{1})^{\#3}$ and one more 15-vertex triangulation of
$(\TPSS)^{\#3}$.

Observe that if $f_0(f_0 - 11) = -15\chi$ and $f_0\geq 15$ then
$\chi$ is even and negative. Moreover, $-\chi/2$ divides $f_0$ if
and only if $f_0 = 21, 26$ or 41.  Note that, in each of the three
cases, $p=-\chi/2$ is a prime. For these cases, we have
constructed triangulated 4-manifolds which satisfy $f_0(f_0 - 11)
= -15\chi$ and have automorphism groups $\ZZ_p$. More explicitly, we
have constructed a 21-vertex triangulation of $(S^{\,3} \times
S^{1})^{\#8}$, a 21-vertex triangulation of $(\TPSS)^{\#8}$, a
26-vertex triangulation of $(\TPSS)^{\#14}$ and a 41-vertex
triangulation of $(S^{\,3} \times S^{1})^{\#42}$. For each of our
triangulated manifolds, the full automorphism group is $\ZZ_p$,
where $p= -\chi/2$.

Effenberger proved that any 2-neighborly
$\FF$-orientable member of ${\cal K}(4)$ is $\FF$-tight (cf.
Proposition \ref{P4} below). By a result (Proposition \ref{P5}
below) of
Bagchi and Datta, for any field $\FF$, any $\FF$-tight member of ${\cal
K}(4)$ is strongly minimal. Therefore, our orientable (resp.,
non-orientable) examples are $\QQ$-tight (resp., $\ZZ_2$-tight)
and strongly minimal.

\section{Preliminaries}

All simplicial complexes considered here are finite and abstract.
By a triangulated manifold/sphere/ball, we mean an abstract
simplicial complex whose geometric carrier is a topological
manifold/sphere/ball. We identify two complexes if they are
isomorphic. A $d$-dimensional simplicial complex is called {\em
pure} if all its maximal faces (called {\em facets}) are
$d$-dimensional. A $d$-dimensional pure simplicial complex is said
to be a {\em weak pseudomanifold} if each of its $(d - 1)$-faces
is in at most two facets. For a $d$-dimensional weak
pseudomanifold $X$, the {\em boundary} $\partial X$ of $X$ is the
pure subcomplex of $X$ whose facets are those $(d-1)$-dimensional
faces of $X$ which are contained in unique facets of $X$. The {\em
dual graph} $\Lambda(X)$ of a pure simplicial complex $X$ is the
graph whose vertices are the facets of $X$, where two facets are
adjacent in $\Lambda(X)$ if they intersect in a face of
codimension one. A {\em pseudomanifold} is a weak pseudomanifold
with a connected dual graph. All connected triangulated manifolds
are automatically pseudomanifolds.

If $X$ is a $d$-dimensional simplicial complex then, for $0\leq j
\leq d$, the number of its $j$-faces is denoted by $f_j = f_j(X)$.
The vector $f(X) := (f_0, \dots, f_d)$ is called the {\em face
vector} of $X$ and the number $\chi(X) := \sum_{i=0}^{d} (-1)^i
f_i$ is called the {\em Euler characteristic} of $X$. As is well
known, $\chi(X)$ is a topological invariant, i.e., it depends only
on the homeomorphic type of $|X|$. A simplicial complex $X$ is
said to be {\em $l$-neighborly} if any $l$ vertices of $X$ form a
face of $X$. A 2-neighborly simplicial complex is also called a
{\em neighborly} simplicial complex.

A {\em standard $d$-ball} is a pure $d$-dimensional simplicial
complex with one facet. The standard ball with facet $\sigma$ is
denoted by $\overline{\sigma}$. A $d$-dimensional pure simplicial
complex $X$ is called a {\em stacked $d$-ball} if there exists a
sequence $B_1, \dots, B_m$ of pure simplicial complexes such that
$B_1$ is a standard $d$-ball, $B_m=X$ and, for $2\leq i\leq m$,
$B_i = B_{i-1}\cup \overline{\sigma_i}$ and $B_{i-1} \cap
\overline{\sigma_i} = \overline{\tau_i}$, where $\sigma_i$ is a
$d$-face and $\tau_i$ is a $(d-1)$-face of $\sigma_i$. Clearly, a
stacked ball is a pseudomanifold. A simplicial complex is called a
{\em stacked $d$-sphere} if it is the boundary of a stacked
$(d+1)$-ball. A trivial induction on $m$ shows that a stacked
$d$-ball actually triangulates a topological $d$-ball, and hence a
stacked $d$-sphere is a triangulated $d$-sphere. If $X$ is a
stacked ball then clearly $\Lambda(X)$ is a tree. So, a stacked ball
is a pseudomanifold whose dual graph is a tree. But, the converse
is not true (e.g., the 3-pseudomanifold $X$ whose facets are
$1234, 2345, 3456, 4567, 5671$ is a pseudomanifold for which
$\Lambda(X)$ is a tree but $|X|$ is not a ball). Here we have

\begin{lemma}\label{lemma:stackedball} Let $X$ be a pure
$d$-dimensional simplicial complex.  \begin{enumerate} \item[{\rm
(i)}] If $\Lambda(X)$ is a tree then $f_0(X) \leq f_d(X) +d$.
\item[{\rm (ii)}] $\Lambda(X)$ is a tree and $f_0(X) = f_d(X) +d$
if and only if $X$ is a stacked ball.  \end{enumerate}
\end{lemma}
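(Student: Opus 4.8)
The plan is to prove (i) by building $X$ up one facet at a time along the tree $\Lambda(X)$, and then to obtain both implications in (ii) from the equality case of that argument together with the definition of a stacked ball. For (i): since $\Lambda(X)$ is a tree it is in particular connected, so a breadth-first search from any root lets me order the $m := f_d(X)$ facets as $\sigma_1, \dots, \sigma_m$ so that for each $i \geq 2$ the facet $\sigma_i$ is adjacent in $\Lambda(X)$ to some $\sigma_j$ with $j < i$. Set $X_i := \overline{\sigma_1} \cup \dots \cup \overline{\sigma_i}$, so that $X_1$ is a standard $d$-ball with $f_0(X_1) = d+1$ and $X_m = X$. For $i \geq 2$, adjacency means $\sigma_i \cap \sigma_j$ is a $(d-1)$-face whose $d$ vertices all lie in $\sigma_j \subseteq X_{i-1}$; hence $\sigma_i$ carries at most one vertex not already present in $X_{i-1}$, i.e.\ $f_0(X_i) \leq f_0(X_{i-1}) + 1$. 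Summing these inequalities gives $f_0(X) = f_0(X_m) \leq (d+1) + (m-1) = f_d(X) + d$, proving (i).

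For the ``if'' direction of (ii): if $X$ is a stacked $d$-ball with defining sequence $B_1, \dots, B_m = X$, then at step $i$ the intersection $B_{i-1} \cap \overline{\sigma_i} = \overline{\tau_i}$ with $\tau_i$ a $(d-1)$-face of $\sigma_i$ forces $\sigma_i \notin B_{i-1}$ and forces exactly one vertex of $\sigma_i$ (the vertex opposite $\tau_i$) to be absent from $B_{i-1}$; moreover the $\sigma_i$ are pairwise distinct (if $\sigma_i = \sigma_k$ for $k < i$ then $\overline{\sigma_i} \subseteq B_{i-1}$, contradicting the intersection condition). Hence $f_d(X) = m$ and $f_0(X) = (d+1) + (m-1) = f_d(X) + d$, while $\Lambda(X)$ is a tree as already noted in the text before the lemma.

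For the ``only if'' direction of (ii): assume $\Lambda(X)$ is a tree and $f_0(X) = f_d(X) + d$. Running the argument of (i), this hypothesis forces equality in every step, so $f_0(X_i) = f_0(X_{i-1}) + 1$ for each $i \geq 2$; thus $\sigma_i$ has a single vertex $v_i \notin X_{i-1}$, and its remaining $d$ vertices span a $(d-1)$-face $\tau_i$ of $\sigma_i$. The shared $(d-1)$-face $\sigma_i \cap \sigma_j$ cannot contain $v_i$ (since $v_i \notin X_{i-1} \supseteq \overline{\sigma_j}$), so it must equal $\tau_i$; therefore $\overline{\tau_i} \subseteq X_{i-1} \cap \overline{\sigma_i}$, and since every face of $\overline{\sigma_i}$ containing $v_i$ is missing from $X_{i-1}$, we get $X_{i-1} \cap \overline{\sigma_i} = \overline{\tau_i}$ exactly. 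Thus $X_1, \dots, X_m = X$ is a valid defining sequence exhibiting $X$ as a stacked ball.

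I expect the only genuine subtlety to lie in that last step: checking that $X_{i-1} \cap \overline{\sigma_i}$ is \emph{precisely} the standard $(d-1)$-ball $\overline{\tau_i}$, and not merely something containing it. The observation that unlocks this is that a predecessor $\sigma_j$ of $\sigma_i$ in the tree already carries the $(d-1)$-face on the $d$ ``old'' vertices of $\sigma_i$, which pins that face down to be exactly $\tau_i$; the rest is bookkeeping with the greedy vertex count.
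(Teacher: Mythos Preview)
Your proof is correct and follows essentially the same idea as the paper's: both peel the tree $\Lambda(X)$ one facet at a time and track that each step changes the vertex count by at most (respectively, exactly) one. The only cosmetic difference is direction --- the paper removes a leaf and inducts on $f_d(X)$, while you build up along a BFS ordering and sum the per-step inequalities --- and you are slightly more explicit than the paper in verifying that $X_{i-1}\cap\overline{\sigma_i}=\overline{\tau_i}$ in the equality case.
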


\begin{proof} Let $f_d(X)=m$ and $f_0(X)=n$. So,
$\Lambda(X)$ is a graph with $m$ vertices. We prove (i) by
induction on $m$. If $m=1$ then the result is true with equality.
So, assume that $m >1$ and the result is true for smaller values
of $m$. Since $\Lambda(X)$ is a tree, it has a vertex $\sigma$ of
degree one (leaf) and hence $\Lambda(X)-\sigma$ is again a tree.
Let $Y$ be the pure simplicial complex (of dimension $d$) whose
facets are those of $X$ other than $\sigma$. Since $\sigma$ has a
$(d-1)$-face in $Y$, it follows that $f_0(Y) \geq n-1$. Since
$f_d(Y) = m -1$, the result is true for $Y$ and hence $f_0(Y) \leq
(m -1) +d$. Therefore, $n \leq f_0(Y) +1 \leq 1 + (m-1) +d = m+d$.
This proves (i).

If $X$ is a stacked $d$-ball with $m$ facets then $X$ is a
pseudomanifold and by the definition (since at each of the $m-1$
stages one adds one facet and one vertex), $n = (d+1) + (m-1) =
m+d$. Conversely, let $\Lambda(X)$ be a tree and $n= f_0(X) =
m+d$. Let $Y$ be as above. Since $f_0(Y) \geq n -1$, it follows
that $f_0(Y) = n$ or $n -1$. If $f_0(Y) = n$ then $f_0(Y) = n >
(m-1) +d = f_d(Y) +m$, a contradiction to part (i). So, $f_0(Y) =
n-1$ and hence $Y\cap \overline{\sigma}$ is a $(d-1)$-face of
$\sigma$. Since $f_d(Y) = m-1$, by induction hypothesis, $Y$ is a
stacked $d$-ball and hence $X = Y \cup \overline{\sigma}$ is a
stacked $d$-ball. This
proves (ii). 
\end{proof}

\begin{cor}\label{cor:C0} Let $X$ be a pure $d$-dimensional
simplicial complex and let $CX$ denote a {\em cone} over $X$.
Then $CX$ is a stacked $(d+1)$-ball if and only if $X$ is a
stacked $d$-ball.
\end{cor}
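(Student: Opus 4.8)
The plan is to deduce this directly from Lemma~\ref{lemma:stackedball}(ii), using three elementary facts about how the cone construction interacts with the face vector and with the dual graph. Write $CX = v \ast X$, where $v$ is the apex (a new vertex not belonging to $X$). Since $X$ is pure of dimension $d$, the complex $CX$ is pure of dimension $d+1$, its facets being exactly the sets $\{v\} \cup \sigma$ as $\sigma$ ranges over the facets of $X$; in particular $f_{d+1}(CX) = f_d(X)$ and $f_0(CX) = f_0(X) + 1$.

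Next I would check that $\sigma \mapsto \{v\}\cup\sigma$ induces an isomorphism $\Lambda(X) \cong \Lambda(CX)$ of dual graphs. For facets $\sigma,\sigma'$ of $X$ one has $(\{v\}\cup\sigma)\cap(\{v\}\cup\sigma') = \{v\}\cup(\sigma\cap\sigma')$, a set of cardinality $1 + |\sigma\cap\sigma'|$; this is a codimension-one face of $CX$ (i.e.\ it has $d+1$ vertices) precisely when $|\sigma\cap\sigma'| = d$, i.e.\ when $\sigma\cap\sigma'$ is a codimension-one face of $X$. Hence the two facets $\{v\}\cup\sigma$, $\{v\}\cup\sigma'$ are adjacent in $\Lambda(CX)$ if and only if $\sigma,\sigma'$ are adjacent in $\Lambda(X)$, so $\Lambda(CX)$ is a tree if and only if $\Lambda(X)$ is a tree.

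Finally I would combine these observations with Lemma~\ref{lemma:stackedball}(ii). By that lemma, $CX$ is a stacked $(d+1)$-ball if and only if $\Lambda(CX)$ is a tree and $f_0(CX) = f_{d+1}(CX) + (d+1)$. Substituting the identities above, the numerical condition reads $f_0(X) + 1 = f_d(X) + (d+1)$, i.e.\ $f_0(X) = f_d(X) + d$, and the tree condition on $\Lambda(CX)$ is equivalent to the tree condition on $\Lambda(X)$. So $CX$ is a stacked $(d+1)$-ball if and only if $\Lambda(X)$ is a tree and $f_0(X) = f_d(X) + d$, which by Lemma~\ref{lemma:stackedball}(ii) again is exactly the condition that $X$ be a stacked $d$-ball.

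There is essentially no serious obstacle here; the only point needing a little care is the dual-graph isomorphism — in particular, doing the bookkeeping of vertex counts correctly (a codimension-one face of $CX$ has $d+1$ vertices, exactly one of which is $v$) so that codimension-one intersections in $CX$ correspond precisely to codimension-one intersections in $X$. One could instead give a direct induction that tracks a stacking sequence $B_1,\dots,B_m$ for $X$ and cones each $B_i$, but the route through Lemma~\ref{lemma:stackedball} is shorter and avoids re-proving that cones of stacked balls are balls.
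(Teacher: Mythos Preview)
Your proof is correct and follows exactly the same route as the paper's: the paper simply records that $f_{d+1}(CX)=f_d(X)$, $f_0(CX)=f_0(X)+1$, and $\Lambda(CX)\cong\Lambda(X)$, and then invokes Lemma~\ref{lemma:stackedball}. You have just spelled out the dual-graph isomorphism and the numerical substitution that the paper leaves implicit.
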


\begin{proof} Notice that $f_{d+1}(CX)=f_d(X)$ and
$f_0(CX)=f_0(X)+1$. Also $\Lambda(CX)$ is naturally isomorphic to
$\Lambda(X)$. The proof now follows from Lemma
\ref{lemma:stackedball}.
\end{proof}

In \cite{wa}, Walkup defined the class ${\cal K}(d)$ as the family
of all $d$-dimensional simplicial complexes all whose vertex-links
are stacked $(d - 1)$-spheres. Clearly, all the members of ${\cal
K}(d)$ are triangulated closed manifolds. Let ${\cal K}^{\ast}(d)$
be the class of 2-neighborly members of  ${\cal K}(d)$. We know
the following.

\begin{prop}[Bagchi and Datta \cite{bd16}]\label{P2}
Let $M$ be a connected closed triangulated manifold of dimension
$d\geq 3$. Let $\beta_1=\beta_1(M;\ZZ_{2})$. Then the face vector
of $M$ satisfies:
\begin{enumerate}[{\rm (a)}]
\item $ f_j \geq \begin{cases}
            \binom{d+1}{j}f_0+j\binom{d+2}{j+1}(\beta_1-1), & \mbox{
if } 1\leq j<d, \\
            df_0+(d-1)(d+2)(\beta_1-1), & \mbox{ if } j=d.
        \end{cases}
$

\item $\binom{f_0-d-1}{2}\geq \binom{d+2}{2}\beta_1$.
\end{enumerate}
When $d\geq 4$, the equality holds in {\rm (a)} $($for some $j
\geq 1$, equivalently, for all $j\,)$ if and only if $M\in {\cal
K}(d)$, and equality holds in {\rm (b)} if and only if $M\in
{\cal K}^{\ast}(d)$.
\end{prop}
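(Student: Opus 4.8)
Write $g_2(M):=f_1-(d+1)f_0+\binom{d+2}{2}$, so that the $j=1$ case of (a) is exactly
\[
g_2(M)\ \ge\ \binom{d+2}{2}\,\beta_1,\qquad(\star)
\]
and the asserted equality case for $j=1$ says that, when $d\ge4$, equality in $(\star)$ holds exactly when $M\in{\cal K}(d)$. The plan is to take $(\star)$ and its equality case as the single nontrivial input and deduce everything else by elementary face-counting on vertex links. Granting this, (b) is immediate: since $f_1\le\binom{f_0}{2}$ one has $\binom{f_0-d-1}{2}=\binom{f_0}{2}-(d+1)f_0+\binom{d+2}{2}\ge g_2(M)\ge\binom{d+2}{2}\beta_1$; equality in (b) forces both $f_1=\binom{f_0}{2}$ (so $M$ is $2$-neighborly) and equality in $(\star)$, hence $M\in{\cal K}^{\ast}(d)$, while conversely any $M\in{\cal K}^{\ast}(d)$ makes both of these steps equalities.

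To pass from $j=1$ to the other values of $j$ in (a) I would use the identity $(j+1)f_j(M)=\sum_{v}f_{j-1}(\Lk{M}{v})$, valid because a $(j-1)$-face of $\Lk{M}{v}$ together with $v$ is precisely a $j$-face of $M$ through $v$; in particular $\sum_v f_0(\Lk{M}{v})=2f_1(M)$ and $\sum_v1=f_0(M)$. Each $\Lk{M}{v}$ is a triangulated $(d-1)$-sphere (more generally a normal $(d-1)$-pseudomanifold), so $\beta_1(\Lk{M}{v})=0$ for $d\ge3$, and the classical Lower Bound Theorem for spheres applies: $f_{j-1}(\Lk{M}{v})\ge\binom{d}{j-1}f_0(\Lk{M}{v})-(j-1)\binom{d+1}{j}$ for $2\le j\le d-1$, and $f_{d-1}(\Lk{M}{v})\ge(d-1)f_0(\Lk{M}{v})-(d+1)(d-2)$, with equality in either bound if and only if $\Lk{M}{v}$ is a stacked sphere. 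Summing over $v$, substituting $\sum_v f_0(\Lk{M}{v})=2f_1(M)$ and then the bound $(\star)$ for $f_1$, and simplifying via the identities $(d+1)\binom{d}{j-1}=j\binom{d+1}{j}$ and $\tfrac{2}{j+1}\binom{d}{j-1}\binom{d+2}{2}=j\binom{d+2}{j+1}$, produces exactly the stated lower bound for $f_j$, for every $j$ up to $j=d$. For the equality clause: equality in (a) for any single $j\ge1$ forces $M\in{\cal K}(d)$ — for $j=1$ this is the equality case of $(\star)$, and for $j\ge2$ it holds because the sum over $v$ can be tight only when every link bound is tight, i.e. only when every vertex link is a stacked sphere; conversely, for $M\in{\cal K}(d)$ the link bounds are exact equalities and the equality case of $(\star)$ gives $f_1=(d+1)f_0+\binom{d+2}{2}(\beta_1-1)$, so the displayed identity forces equality in (a) for all $j$ simultaneously. (Equivalently one may compute the whole face vector of any member of ${\cal K}(d)$ from Kalai's structure theorem, Proposition \ref{P3}, by tracking how each elementary handle addition changes $(f_0,\dots,f_d)$.) This proves the ``for some $j$, equivalently for all $j$'' clause.

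What remains, and is the real obstacle, is $(\star)$ itself and its equality case. For the inequality I would use the rigidity-theoretic proof of the generalized lower bound theorem: by Kalai's generic-rigidity theorem the $1$-skeleton of a connected closed $d$-manifold ($d\ge2$) is generically $(d+1)$-rigid, so the space of affine stresses on it has dimension exactly $g_2(M)$, and by a refinement due to Novik and Swartz this stress space contains a subspace of dimension at least $\binom{d+2}{2}\beta_1(M;\ZZ_{2})$ built from $H_1(M;\ZZ_{2})$; hence $(\star)$. For the equality case one must show that $g_2(M)=\binom{d+2}{2}\beta_1$ forces every vertex link to be a stacked $(d-1)$-sphere, so that $M\in{\cal K}(d)$ when $d\ge4$; this requires a finer structural analysis of the simplicial complexes attaining the minimum of $g_2$ — via edge contractions, or via the face-ring structure of a Buchsbaum complex — showing that such an $M$ is assembled from stacked spheres by handle additions, whose vertex links are all stacked. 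This structural step is the heart of the matter and is precisely what is supplied in \cite{bd16}; given it, the remaining content of the proposition is the bookkeeping sketched above.
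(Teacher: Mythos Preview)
The paper does not actually prove Proposition~\ref{P2}: it is quoted as a result of Bagchi and Datta \cite{bd16}, with the remark that the $d=4$ case goes back to Walkup and K\"uhnel and that part (b) is due to Lutz, Sulanke and Swartz. So there is no in-paper proof to compare your attempt against.

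That said, your outline is a correct and standard reconstruction of how the argument in those references actually goes. The reduction of (b) to $(\star)$ via $f_1\le\binom{f_0}{2}$, and the reduction of the higher $f_j$-bounds to the $j=1$ case by summing the Lower Bound Theorem over vertex links through the identity $(j+1)f_j(M)=\sum_v f_{j-1}(\Lk{M}{v})$, are exactly right; the two binomial identities you quote check out and yield the stated bounds for $1\le j\le d$. Your identification of the core input $(\star)$ with the Kalai rigidity\,/\,Novik--Swartz socle argument, and of its equality case with the structural theorem characterizing ${\cal K}(d)$, is accurate and is precisely what \cite{bd16} supplies. One small point worth making explicit in your equality discussion: tightness of the summed link bounds for some $j\ge2$ forces \emph{both} that every link is stacked \emph{and} that equality holds in $(\star)$; you only state the first, but since (for $d\ge4$) stacked links already give $M\in{\cal K}(d)$, which in turn forces equality in $(\star)$, no logical gap results. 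Also note that the LBT equality characterization (equality iff stacked) for the links requires the link dimension $d-1\ge3$, i.e.\ $d\ge4$, which is why the proposition restricts the equality clause to that range --- you use this implicitly but could flag it.
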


The case $d=4$ of the above proposition is due to Walkup \cite{wa}
and K\"{u}hnel \cite{ku}. Part (b) of the above proposition is due
to Lutz, Sulanke and Swartz \cite{LSS}.

\begin{prop}[Kalai \cite{ka}]\label{P3} For $d\geq 4$, a connected
simplicial complex $X$ is in ${\cal K}(d)$ if and only if $X$ is
obtained from a stacked $d$-sphere by $\beta_1(X)$ combinatorial
handle additions. In consequence, any such $X$ triangulates either
$(S^{\,d -1}\!\times S^1)^{\# \beta_1}$ or $(\TPSSD)^{\#\beta_1}$
according as $X$ is orientable or not. $($Here $\beta_1 =
\beta_1(X).)$ \end{prop}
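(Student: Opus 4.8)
\medskip

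\noindent\emph{Proof strategy.}
The plan is to prove the two implications separately and then read off the homeomorphism type. Recall that a combinatorial handle addition on a connected closed triangulated $d$-manifold $Y$ picks two disjoint facets $\sigma,\tau$ and a bijection $\psi\colon\sigma\to\tau$ satisfying the standard admissibility condition (so that the identification stays simplicial and the quotient is a manifold), deletes $\sigma$ and $\tau$, and identifies each $v\in\sigma$ with $\psi(v)$. The result $Y^{\psi}$ is again a connected closed triangulated $d$-manifold, with $f_0(Y^{\psi})=f_0(Y)-(d+1)$ and $f_1(Y^{\psi})=f_1(Y)-\binom{d+1}{2}$ (the $\binom{d+1}{2}$ edges inside $\tau$ collapse onto those inside $\sigma$ and nothing else is identified), while a Mayer--Vietoris/surgery argument gives $\beta_1(Y^{\psi};\ZZ_2)=\beta_1(Y;\ZZ_2)+1$.

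For ``$\Leftarrow$'': every stacked $d$-sphere lies in ${\cal K}(d)$, because a stacked $d$-sphere is obtained from $\partial\Delta^{d+1}\in{\cal K}(d)$ by repeatedly starring facets, and a stellar subdivision of a facet leaves the link of each vertex not in that facet unchanged, performs one stacking on the link of each vertex of that facet, and gives the new vertex the link $\partial\Delta^{d-1}$ --- each of these a stacked $(d-1)$-sphere. Next, a handle addition preserves ${\cal K}(d)$: if $Y\in{\cal K}(d)$ then $Y$ attains equality in Proposition \ref{P2}(a), and feeding the above changes of $f_0,f_1,\beta_1$ into that equality --- here the identity $\binom{d+2}{2}+\binom{d+1}{2}=(d+1)^2$ is precisely what makes it balance --- shows $Y^{\psi}$ attains equality in \ref{P2}(a) as well; since $Y^{\psi}$ is a connected closed triangulated $d$-manifold and $d\ge 4$, Proposition \ref{P2} returns $Y^{\psi}\in{\cal K}(d)$. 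Thus any complex built from a stacked $d$-sphere by $k$ handle additions lies in ${\cal K}(d)$ and has $\beta_1=k$.

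For ``$\Rightarrow$'': let $X\in{\cal K}(d)$ be connected; then $X$ is a connected closed triangulated $d$-manifold attaining equality in Proposition \ref{P2}(a), and we induct on $\beta_1=\beta_1(X)$. If $\beta_1=0$, the $j=1$ instance of that equality places $X$ at the minimum of the Lower Bound Theorem, so $X$ is a stacked $d$-sphere by the equality case of the LBT for triangulated manifolds (Barnette, Kalai), and no handle additions are needed. If $\beta_1\ge 1$, the heart of the argument is to realize $X$ as $(X')^{\psi}$ for some connected $X'\in{\cal K}(d)$ with $\beta_1(X')=\beta_1-1$: one must locate a ``reversible handle'', namely a subcomplex $K\cong\partial\Delta^{d}$ of $X$ that is $2$-sided, non-separating, and contains no $d$-face of $X$; cutting $X$ along $K$ and capping the two resulting boundary $(d-1)$-spheres with $d$-simplices then produces $X'$. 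Granting such a $K$, one has $X=(X')^{\psi}$, so $X'$ attains equality in \ref{P2}(a) (the same $f$-vector computation read backwards) and hence $X'\in{\cal K}(d)$, while $\beta_1(X')=\beta_1-1$ by the $\beta_1$-count; the induction hypothesis then exhibits $X'$, and therefore $X$, as a stacked $d$-sphere followed by $\beta_1$ handle additions.

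Finally, the ``in consequence'': a stacked $d$-sphere triangulates $S^{d}$, and each handle addition is surgery on an embedded $S^{0}$, i.e.\ a connected sum with $S^{d-1}\times S^{1}$ when $\psi$ is compatible with a coherent orientation and with $\TPSSD$ otherwise; since $(S^{d-1}\times S^{1})\#(\TPSSD)\cong(\TPSSD)^{\#2}$, the outcome of $\beta_1$ handle additions depends only on $\beta_1$ and on orientability, so it is $(S^{d-1}\times S^{1})^{\#\beta_1}$ when $X$ is orientable and $(\TPSSD)^{\#\beta_1}$ otherwise. The main obstacle is the inductive step of ``$\Rightarrow$'' --- proving that every member of ${\cal K}(d)$ with positive $\beta_1$ contains a reversible handle. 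This is exactly where the hypothesis $d\ge 4$ is indispensable: it forces the vertex links, being stacked $(d-1)$-spheres of dimension $\ge 3$, to be unknotted and generically rigid enough that cutting a handle cannot leave the class, and it is here that Kalai's rigidity machinery does the real work; I would invoke that machinery rather than rebuild it. The remaining ingredients --- the $f$-vector/$\beta_1$ bookkeeping and checking that a single twisted handle renders the whole manifold non-orientable of the stated connected-sum type --- are routine.
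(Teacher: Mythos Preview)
The paper does not prove Proposition~\ref{P3} at all; it is quoted from Kalai~\cite{ka} and used as a black box. So there is no in-paper argument to compare yours against.

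As an independent sketch your outline is sound, but two points deserve comment. First, your ``$\Leftarrow$'' argument that handle addition preserves ${\cal K}(d)$ by bookkeeping against the equality case of Proposition~\ref{P2} is clean (the identity $\binom{d+2}{2}+\binom{d+1}{2}=(d+1)^2$ is exactly right), yet note that the equality characterisation in \ref{P2} itself rests on Kalai's rigidity machinery, so this route is not truly independent of \cite{ka}; a direct verification---checking that each vertex link after the handle identification is still a stacked $(d-1)$-sphere---avoids that. (Also, the new vertex in a stellar subdivision of a $d$-facet has link $\partial\Delta^{d}$, not $\partial\Delta^{d-1}$.) Second, and more substantively, you correctly flag the hard step of ``$\Rightarrow$'': producing a reversible handle when $\beta_1\geq 1$. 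You defer this to ``Kalai's rigidity machinery'', which is honest but means you are invoking precisely the content of the theorem you are asked to prove. Kalai's actual argument proceeds by showing that every $X\in{\cal K}(d)$ with $d\geq 4$ and $f_0(X)>d+2$ has a missing $d$-face; if that missing face is the link of a vertex one undoes a stacking, and otherwise one cuts along it to undo a handle. That combinatorial fact is the real engine and cannot be sidestepped.
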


It follows from Proposition \ref{P3}  that \begin{eqnarray}
\label{eq:beta1} \chi(X) = 2 - 2 \beta_1(X) \, \mbox{ for } \,
X\in {\cal K}(d).  \end{eqnarray}

For a field $\FF$, a $d$-dimensional simplicial complex $X$ is
called {\em tight with respect to} $\FF$ (or {\em $\FF$-tight}) if
(i) $X$ is connected, and (ii) for all induced subcomplexes $Y$ of
$X$ and for all $0\leq j \leq d$, the morphism $H_{j}(Y; \FF) \to
H_{j}(X; \FF)$ induced by the inclusion map $Y \hookrightarrow X$
is injective. If $X$ is $\QQ$-tight then it is $\FF$-tight for all
fields $\FF$ and called {\em tight} (cf. \cite{bd17}).

A $d$-dimensional simplicial complex $X$ is called {\em minimal}
if $f_0(X) \leq f_0(Y)$ for every triangulation $Y$ of the
geometric carrier $|X|$ of $X$. We say that $X$ is {\em strongly
minimal} if $f_i(X) \leq f_i(Y)$, $0\leq i \leq d$, for all such
$Y$. We know the following.

\begin{prop}[Effenberger \cite{ef}, Bagchi and Datta
\cite{bd16}]\label{P4} Every $\FF$-orientable member of ${\cal
K}^{\ast}(d)$ is $\FF$-tight for $d\neq 3$. An $\FF$-orientable
member of ${\cal K}^{\ast}(3)$ is $\FF$-tight if and only if
$\beta_1(X)=(f_0(X)-4)(f_0(X)-5)/20$.
\end{prop}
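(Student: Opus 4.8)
This statement packages two results — an unconditional tightness claim when $d\neq 3$ and a characterisation when $d=3$ — and I would obtain both through one reduction of $\FF$-tightness to a local condition on the vertex links, then exploit that, for $X\in{\cal K}(d)$, every vertex link is a stacked $(d-1)$-sphere. So let $X\in{\cal K}^{\ast}(d)$ be $\FF$-orientable; it is connected and $2$-neighborly, every vertex link $\Lk{X}{v}$ is a stacked $(d-1)$-sphere, and (by Proposition \ref{P3}, when $d\geq 4$) its reduced $\FF$-homology is concentrated in degrees $1$, $d-1$ and $d$; write $\beta:=\beta_1(X;\FF)$. I claim $X$ is $\FF$-tight if and only if, for every vertex $v$ and every induced subcomplex $N\subseteq X$ containing $v$, the inclusion-induced map $\tilde H_j(\Lk{N}{v};\FF)\to\tilde H_j(N\setminus v;\FF)$ is zero for all $j$. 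One direction is immediate from the definition of tightness, applied to the pair of induced subcomplexes $N\setminus v\subseteq N$ of $X$. For the other, $2$-neighborliness makes every induced subcomplex of $X$ have complete $1$-skeleton, hence connected, which settles degree $0$; for $j\geq 1$, Mayer--Vietoris applied to $N=(N\setminus v)\cup\st{N}{v}$, with $(N\setminus v)\cap\st{N}{v}=\Lk{N}{v}$ and $\st{N}{v}$ a cone, shows the displayed vanishing is equivalent to injectivity of $\tilde H_j(N\setminus v)\to\tilde H_j(N)$; an induction on the number of vertices deleted from $X$ then yields injectivity of $\tilde H_j(Y)\to\tilde H_j(X)$ for every induced $Y$, i.e.\ $\FF$-tightness. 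The point for what follows is that $\Lk{N}{v}$ is an \emph{induced} subcomplex of the stacked $(d-1)$-sphere $\Lk{X}{v}$.

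\emph{The case $d\neq 3$.} The combinatorial core, which is essentially Effenberger's contribution, is that an induced subcomplex $A$ of a stacked $(d-1)$-sphere has $\tilde H_i(A;\FF)=0$ for $1\leq i\leq d-3$: through dimension $d-2$ such an $A$ coincides with the clique complex of its $1$-skeleton, which is a chordal graph, so the reduced homology in those degrees vanishes. As this range is nonempty exactly when $d\geq 4$, it handles all degrees $j$ with $1\leq j\leq d-3$ (the source of the map is then $0$). Two degrees remain. In the penultimate degree $j=d-2$ one applies combinatorial Alexander duality inside $\Lk{X}{v}$, which converts a nonzero class of $\tilde H_{d-2}(\Lk{N}{v})$ into disconnectedness of the complementary induced subcomplex of $\Lk{X}{v}$, and then uses the $2$-neighborliness of $X$ to see that the corresponding class already bounds in $N\setminus v$. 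In the top degree $j=d-1$, the source is nonzero only when $\Lk{N}{v}=\Lk{X}{v}$, which by $2$-neighborliness happens only for $N=X$; there $N\setminus v$ is the compact $d$-manifold with boundary $X_{-v}:=X[V(X)\setminus\{v\}]$, whose boundary is $\Lk{X}{v}$, and $[\Lk{X}{v}]$ dies in $X_{-v}$ by the long exact sequence of that pair (this is where $\FF$-orientability enters). Plugging this back into the Mayer--Vietoris sequences of the reduction gives $\FF$-tightness.

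\emph{The case $d=3$.} Now $\Lk{X}{v}$ is a stacked $2$-sphere, and its induced subcomplexes can carry first homology — a $3$-cycle spanning no $2$-face being the simplest instance — so the argument above collapses exactly in degree $j=1$, and an extra hypothesis is needed. I would show that, for $\FF$-orientable $X\in{\cal K}^{\ast}(3)$, the surviving degree-$1$ obstruction vanishes for \emph{all} induced subcomplexes if and only if equality holds in the bound $\binom{f_0-4}{2}\geq\binom{5}{2}\beta_1$ of Proposition \ref{P2}(b), equivalently (since $X$ is $2$-neighborly) $\beta_1(X)=(f_0(X)-4)(f_0(X)-5)/20$. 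The ``only if'' is the standard fact that an $\FF$-tight closed manifold attains the face-number lower bounds of Proposition \ref{P2} with equality. For the ``if'', at the extremal value of $\beta_1$ the $2$-neighborly complex $X$ bounds a triangulated $4$-manifold $W$ all of whose faces of dimension $\leq 2$ lie in $X$; an Euler-characteristic and face-number count for $W$ then leaves no room for a $1$-cycle of $\Lk{N}{v}$ to survive non-trivially in $N\setminus v$, which by the reduction is $\FF$-tightness.

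\emph{Main obstacle.} The substantive work is in the two middle paragraphs: the homological lemma on induced subcomplexes of stacked spheres together with the Alexander-duality-plus-$2$-neighborliness argument in the penultimate degree for $d\geq 4$ (the content of Effenberger's theorem), and, for $d=3$, the construction of and face count for the filling $4$-manifold $W$ (the content of the Bagchi--Datta refinement). The reduction to the link condition, the induction on deleted vertices, and reading off the structure of $X$ from Proposition \ref{P3} are routine.
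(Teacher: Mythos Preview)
The paper does not prove Proposition~\ref{P4}; it is quoted as a known result, attributed to Effenberger \cite{ef} and Bagchi--Datta \cite{bd16}, and invoked as a black box in Lemma~\ref{lemma:tightness}. There is thus no proof in this paper to compare your sketch against.

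That said, your outline follows the broad strategy of the cited sources --- reduce tightness to a link-local vanishing condition via Mayer--Vietoris and induction on deleted vertices, then control the homology of induced subcomplexes of stacked spheres --- and the degree range $1\le j\le d-3$ is handled correctly in spirit (the $1$-skeleton of a stacked sphere is chordal, the stacked $d$-ball filling it is the clique complex of that $1$-skeleton, and clique complexes of chordal graphs are acyclic). Two places are genuinely incomplete, however. First, in degree $j=d-2$: Alexander duality does convert $\tilde H_{d-2}(\Lk{N}{v};\FF)$ into $\tilde H_0$ of the complementary induced piece of $\Lk{X}{v}$, but the sentence ``uses the $2$-neighborliness of $X$ to see that the corresponding class already bounds in $N\setminus v$'' does not constitute an argument. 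Concretely, a generator of $\tilde H_{d-2}(\Lk{N}{v})$ is the boundary of a missing $(d-1)$-face of the stacked link (an interior face of the filling stacked ball), and one must explain why that boundary becomes null-homologous once one passes from $\Lk{N}{v}$ to $N\setminus v$; the published proofs do this via the structure of $\overline{\cal K}(d+1)$ fillings and a careful count, not by a one-line appeal to $2$-neighborliness. Second, the $d=3$ case: your ``only if'' rests on a claimed ``standard fact'' that $\FF$-tight closed manifolds attain the bound of Proposition~\ref{P2}(b) with equality, but note that the equality characterisation in Proposition~\ref{P2} is stated only for $d\geq 4$, so for $d=3$ this requires a separate argument; and your ``if'' direction (build a filling $W$, then an unspecified face count ``leaves no room'') is a placeholder rather than a proof --- the Bagchi--Datta argument goes through their $\sigma$- and $\mu$-vector inequalities, which you would need to reproduce or replace.
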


\begin{prop}[Bagchi and Datta \cite{bd16}]\label{P5} Every
$\FF$-tight member of\,  ${\cal K}(d)$ is strongly minimal.
\end{prop}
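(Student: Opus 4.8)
The plan is to reduce the whole statement to the lower-bound inequalities of Proposition~\ref{P2}, using $\FF$-tightness only to pin down $2$-neighborliness. First I would record the elementary fact that any $\FF$-tight simplicial complex $X$ with at least two vertices is $2$-neighborly: if some pair $\{u,v\}$ were a non-edge, the induced subcomplex $Y$ of $X$ on $\{u,v\}$ would be two disjoint points, so the inclusion-induced map $H_0(Y;\FF)\to H_0(X;\FF)$ would be a linear map from a $2$-dimensional vector space into the $1$-dimensional space $H_0(X;\FF)$ (an $\FF$-tight complex is connected by definition), hence not injective, contradicting $\FF$-tightness. Consequently an $\FF$-tight $X\in{\cal K}(d)$ lies automatically in ${\cal K}^{\ast}(d)$, and it suffices to prove that every member of ${\cal K}^{\ast}(d)$ is strongly minimal.

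Next, fix $X\in{\cal K}^{\ast}(d)$ and let $Y$ be an arbitrary triangulation of the closed $d$-manifold $|X|$. Since $|X|$ is connected, $Y$ is again a connected closed triangulated $d$-manifold, and $\beta_1:=\beta_1(Y;\ZZ_2)=\beta_1(|X|;\ZZ_2)=\beta_1(X;\ZZ_2)$ because $\ZZ_2$-Betti numbers are topological invariants. Now apply Proposition~\ref{P2} to both complexes. Its part~(b) holds with equality for $X$ (since $X\in{\cal K}^{\ast}(d)$) and as an inequality for $Y$, so
\[
\binom{f_0(Y)-d-1}{2}\ \ge\ \binom{d+2}{2}\,\beta_1\ =\ \binom{f_0(X)-d-1}{2}.
\]
Every triangulated closed $d$-manifold has at least $d+2$ vertices, so $f_0(X)-d-1$ and $f_0(Y)-d-1$ are positive integers; since $n\mapsto\binom{n}{2}$ is strictly increasing on the positive integers, we conclude $f_0(X)\le f_0(Y)$, i.e.\ $X$ is minimal.

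To promote minimality to strong minimality I would use part~(a) of Proposition~\ref{P2}, which holds with equality for $X\in{\cal K}(d)$ and as an inequality for $Y$. For each $j$ with $1\le j\le d$ the $Y$-inequality and the $X$-equality carry the same $(\beta_1-1)$-term (as $X$ and $Y$ share the value $\beta_1$), so that term cancels upon subtraction and one is left with $f_j(Y)-f_j(X)\ge c_j\bigl(f_0(Y)-f_0(X)\bigr)$, where $c_j=\binom{d+1}{j}>0$ for $1\le j<d$ and $c_j=d>0$ for $j=d$. Combined with $f_0(X)\le f_0(Y)$ from the previous step, this gives $f_i(X)\le f_i(Y)$ for all $0\le i\le d$, i.e.\ $X$ is strongly minimal.

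This deduction is short because Proposition~\ref{P2} already contains the hard part; there is no real obstacle, but two points must be handled with care. First, every Betti number occurring must be the $\ZZ_2$-one, so that Proposition~\ref{P2} applies verbatim and the field $\FF$ with respect to which $X$ is tight plays no further role once $2$-neighborliness is secured. Second, it is precisely the equality characterizations in Proposition~\ref{P2} that turn the membership $X\in{\cal K}^{\ast}(d)$ into exact formulas for the $f_i(X)$, which is what makes the termwise comparison with $Y$ go through. This form of the argument needs $d\ge 4$; for $d=3$ one replaces Proposition~\ref{P2} by the corresponding known fact that a $2$-neighborly member of ${\cal K}(3)$ attains the minimal face numbers among all triangulations of its carrier with the same $\ZZ_2$-Betti number, and then runs the same subtraction.
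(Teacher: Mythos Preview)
The present paper does not itself prove Proposition~\ref{P5}; it is quoted from \cite{bd16} without argument, so there is no in-paper proof to compare against. Your deduction is correct for $d\ge 4$ and is the natural one from the tools at hand: the $H_0$ argument cleanly shows that $\FF$-tightness forces $2$-neighborliness, so $X\in{\cal K}^{\ast}(d)$; the equality cases of Proposition~\ref{P2} then pin down every $f_i(X)$ exactly, and the inequality cases applied to an arbitrary triangulation $Y$ of $|X|$ (sharing the topological invariant $\beta_1(\,\cdot\,;\ZZ_2)$) give $f_i(Y)\ge f_i(X)$ for all $i$. The monotonicity step for $f_0$ via $n\mapsto\binom{n}{2}$ is fine since $f_0-d-1\ge 1$.

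Your caveat that the argument as written needs $d\ge 4$ is accurate, because the equality characterizations in Proposition~\ref{P2} are stated only in that range. The closing sentence about $d=3$ is a gesture rather than a proof; if you want the full statement you would need to supply that case separately. For the purposes of this paper, however, Proposition~\ref{P5} is invoked only for $d=4$, so nothing here depends on the low-dimensional gap.
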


Let $\overline{{\cal K}}(d)$ be the class of all $d$-dimensional
simplicial complexes all whose vertex-links are stacked
$(d-1)$-balls. Clearly, if $N \in \overline{{\cal K}}(d)$ then $N$
is a triangulated manifold with boundary and satisfies
\begin{eqnarray} \label{eq:skel} {\rm skel}_{d-2}(N) = {\rm
skel}_{d-2}(\partial N).  \end{eqnarray} Here ${\rm skel}_{j}(N) =
\{\alpha\in N \, : \, \dim(\alpha) \leq j\}$ is the $j$-skeleton
of $N$. We know the following.

\begin{prop}[Bagchi and Datta \cite{bd17}]\label{P6}
For $d \geq 4$, $M\mapsto \partial M$ is a bijection from
$\overline{\cal K}(d+1)$ to ${\cal K}(d)$.
\end{prop}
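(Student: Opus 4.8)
The plan is to verify three things: that $M\mapsto\partial M$ is well defined (lands in ${\cal K}(d)$), injective, and surjective. Two auxiliary facts carry most of the load. \emph{(A) Every vertex link of a stacked $k$-ball is a stacked $(k-1)$-ball.} I would prove this by induction along the stacking sequence $B_1,\dots,B_m$: passing from $B_{i-1}$ to $B_i=B_{i-1}\cup\overline{\sigma_i}$ leaves $\Lk{B_{i-1}}{v}$ unchanged when $v\notin\sigma_i$, turns it into a standard $(k-1)$-ball when $v$ is the new vertex of $\sigma_i$, and when $v\in\tau_i$ performs one stacking move on it across $\tau_i\setminus v$ (which lies in $\partial(\Lk{B_{i-1}}{v})=\Lk{\partial B_{i-1}}{v}$, since $\tau_i$ is a boundary facet of $B_{i-1}$, so this move is legitimate). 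In particular every stacked $k$-ball lies in $\overline{\cal K}(k)$, hence by (\ref{eq:skel}) has the same $(k-2)$-skeleton as its boundary; so (for $k\ge2$) it has no interior vertices and $f_0(\text{ball})=f_0(\text{boundary})$. \emph{(B) For $k\ge3$, a stacked $(k-1)$-sphere $T$ bounds a unique stacked $k$-ball whose vertex set is $V(T)$} (and, by (A), any stacked $k$-ball with boundary $T$ already has vertex set $V(T)$). I would prove this by induction on $f_0(T)$: if $T=\partial\overline{\sigma}$ it is clear; otherwise $T$ has a vertex $u$ of minimum degree $k$, so $\Lk{T}{u}=\partial\overline{\tau}$ for a $(k-1)$-face $\tau$ with $u\notin\tau$, and in any filling $B$ the link $\Lk{B}{u}$ is a stacked $(k-1)$-ball with $f_0=k$ (here $k\ge3$ is used, via (\ref{eq:skel}) for the $(k-1)$-ball $\Lk{B}{u}$), hence $\Lk{B}{u}=\overline{\tau}$; thus $u\ast\overline{\tau}$ is a facet of $B$ and a leaf of $\Lambda(B)$, and deleting it yields a stacked $k$-ball on $V(T)\setminus\{u\}$ filling $T':=(T\setminus\st{T}{u})\cup\overline{\tau}$, which is unique by induction. (Statement (B) genuinely fails for $k=2$ — triangulated polygons give many fillings of a cycle — which is exactly why the hypothesis $d\ge4$ will be needed.)

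\emph{Well defined.} Since stacked balls are PL balls, $M\in\overline{\cal K}(d+1)$ is a combinatorial $(d+1)$-manifold with boundary, every vertex of $M$ lies on $\partial M$, and by (\ref{eq:skel}) $M$ and $\partial M$ share the $(d-1)$-skeleton. For a vertex $v$, the identity $\Lk{M}{v\ast\alpha}=\Lk{\Lk{M}{v}}{\alpha}$ together with ``a face of a combinatorial manifold is a boundary face iff its link is a ball'' gives $\Lk{\partial M}{v}=\partial(\Lk{M}{v})$, the boundary of a stacked $d$-ball, hence a stacked $(d-1)$-sphere; so $\partial M\in{\cal K}(d)$.

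\emph{Injective.} If $M_1,M_2\in\overline{\cal K}(d+1)$ with $\partial M_1=\partial M_2=S$, then for every vertex $v$ the complex $\Lk{M_i}{v}$ is a stacked $d$-ball with boundary $\Lk{S}{v}$ and vertex set $V(\Lk{S}{v})$ (neighbours of $v$ in $M_i$ are its neighbours in $S$, by (\ref{eq:skel})); by (B) with $k=d\ge4$ such a ball is unique, so $\Lk{M_1}{v}=\Lk{M_2}{v}$ for all $v$, whence $M_1=M_2$.

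\emph{Surjective.} Given $S\in{\cal K}(d)$, set $B_v:=$ the unique stacked $d$-ball with $\partial B_v=\Lk{S}{v}$ (existence and $V(B_v)=V(\Lk{S}{v})$ by (A), uniqueness by (B), both needing only $d\ge2$ here). The key computation is that for every edge $uv$ of $S$ one has $\Lk{B_u}{v}=\Lk{B_v}{u}$: by (A), $\Lk{B_u}{v}$ is a stacked $(d-1)$-ball with boundary $\Lk{\partial B_u}{v}=\Lk{S}{uv}$ and vertex set $V(\Lk{S}{uv})$, hence by (B) with $k=d-1\ge3$ (again using $d\ge4$) it is the unique stacked $(d-1)$-ball filling $\Lk{S}{uv}$ on that vertex set; the identical description holds for $\Lk{B_v}{u}$. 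Now define $M$ by taking its faces of dimension $\le d-1$ to be those of $S$ and declaring a $(d+1)$- or $(d+2)$-subset $\rho$ of $V(S)$ to be a face iff $\rho\setminus\{v\}\in B_v$ for every $v\in\rho$. Using (A) (faces of $B_v$ of dimension $\le d-2$ lie in $\Lk{S}{v}$) one checks $M$ is closed under subsets; using the key computation one checks $\Lk{M}{v}=B_v$ for every $v$, so $M\in\overline{\cal K}(d+1)$; and then $\Lk{\partial M}{v}=\partial B_v=\Lk{S}{v}$ for all $v$ forces $\partial M=S$.

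\emph{Main obstacle.} The technical heart is the uniqueness statement (B): the minimum-degree-vertex reduction, its base case, and — crucially — locating the precise spot where $k\ge3$ is needed, so that the hypothesis $d\ge4$ of the proposition is genuinely used and the $k=2$ counterexamples are excluded. Granting (A) and (B), the surjectivity step — confirming that the candidate $M$ really is a simplicial complex with the prescribed vertex links and boundary — is the next most delicate point, but it is essentially bookkeeping once the edge-link identity $\Lk{B_u}{v}=\Lk{B_v}{u}$ is available.
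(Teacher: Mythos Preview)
The paper does not prove Proposition~\ref{P6}; it is quoted from \cite{bd17} and used as a black box (only Corollary~\ref{cor:C1} is derived from it here). So there is no ``paper's own proof'' to compare your proposal against.

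That said, your outline is sound and is, in spirit, exactly the argument that appears in \cite{bd17}. The two auxiliary facts (A) and (B) are the standard ingredients: (A) is an easy induction along a stacking sequence, and (B) is the well-known uniqueness of the stacked filling of a stacked sphere (for $k\ge3$), proved precisely by peeling off a degree-$k$ vertex as you describe. Your identification of where the hypothesis $d\ge4$ enters is correct: injectivity needs (B) at $k=d$, but surjectivity needs it at $k=d-1$ via the edge-link identity $\Lk{B_u}{v}=\Lk{B_v}{u}$, and that is the step that forces $d-1\ge3$.

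The only place I would ask you to write out more carefully is the last paragraph of surjectivity. Your definition of $M$ is the right one, but ``one checks'' is hiding three separate verifications: (i) closure under subsets, in particular that a $(d-1)$-face of a declared $d$-face already lies in $S$ (this is where you use ${\rm skel}_{d-2}(B_v)={\rm skel}_{d-2}(\Lk{S}{v})$); (ii) that $\Lk{M}{v}=B_v$ exactly, i.e.\ both inclusions --- one direction is your edge-link identity, the other is immediate from the definition of $M$; and (iii) that $\partial M=S$, which you correctly reduce to $\Lk{\partial M}{v}=\partial B_v=\Lk{S}{v}$ for all $v$, but you should also note $V(\partial M)=V(M)=V(S)$ so that equality of all vertex links really gives equality of complexes. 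None of this is difficult, but each point deserves a sentence rather than a clause.
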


\begin{cor}\label{cor:C1}
For $d\geq 4$, if $M\in \overline{\cal K}(d+1)$ then ${\rm
Aut}(M)={\rm Aut}(\partial M)$.
\end{cor}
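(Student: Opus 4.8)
\noindent\emph{Plan of proof.} The plan is to realise ${\rm Aut}(M)$ and ${\rm Aut}(\partial M)$ as subgroups of the symmetric group on one common vertex set and then establish the equality by two inclusions, the nontrivial one being a direct appeal to Proposition~\ref{P6}.

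First I would pin down the vertex sets. Since $M\in\overline{\cal K}(d+1)$ and $d\ge 4$, equation~\eqref{eq:skel} applied with $d+1$ in place of $d$ gives ${\rm skel}_{d-1}(M)={\rm skel}_{d-1}(\partial M)$; taking $0$-skeletons of both sides (legitimate as $d-1\ge 0$) yields $V(M)=V(\partial M)$. Equivalently, $M$ has no interior vertex, because every vertex link of $M$ is a stacked $d$-ball, not a $d$-sphere. Writing $V:=V(M)=V(\partial M)$, both ${\rm Aut}(M)$ and ${\rm Aut}(\partial M)$ are subgroups of ${\rm Sym}(V)$, so there is nothing to interpret in the claimed identity.

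Next I would prove the two inclusions. For ${\rm Aut}(M)\subseteq{\rm Aut}(\partial M)$: the boundary $\partial M$ is defined intrinsically from $M$ (it is generated by the $d$-faces of $M$ that lie in a unique facet), so any $\phi\in{\rm Aut}(M)$ satisfies $\phi(\partial M)=\partial(\phi M)=\partial M$, i.e.\ $\phi\in{\rm Aut}(\partial M)$. For the reverse inclusion, take $\psi\in{\rm Aut}(\partial M)$; as a permutation of $V=V(M)$ it induces a simplicial isomorphism $M\to\psi(M)$, and since membership in $\overline{\cal K}(d+1)$ is an isomorphism-invariant condition (stacked balls are carried to stacked balls by simplicial isomorphisms), we get $\psi(M)\in\overline{\cal K}(d+1)$. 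Moreover $\partial(\psi(M))=\psi(\partial M)=\partial M$. Thus $M$ and $\psi(M)$ are two members of $\overline{\cal K}(d+1)$ with the same boundary, and the injectivity half of Proposition~\ref{P6} (the map $M\mapsto\partial M$ is a bijection from $\overline{\cal K}(d+1)$ to ${\cal K}(d)$) forces $\psi(M)=M$, i.e.\ $\psi\in{\rm Aut}(M)$. Combining the two inclusions gives ${\rm Aut}(M)={\rm Aut}(\partial M)$, and the hypothesis $d\ge 4$ enters only through Proposition~\ref{P6}.

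The one point that needs care — and where I expect the only real friction — is that Proposition~\ref{P6} must be used at the level of labelled complexes on a fixed vertex set, so that "$M$ and $\psi(M)$ have the same boundary'' genuinely forces $M=\psi(M)$ and not merely $M\cong\psi(M)$. This is precisely what "bijection'' means once the vertex set of $N\in{\cal K}(d)$ is fixed and the filling $M\in\overline{\cal K}(d+1)$ is reconstructed from it as in \cite{bd17}; so I do not anticipate a genuine obstacle, and essentially all the substantive work has already been carried out in Proposition~\ref{P6} together with the skeleton identity~\eqref{eq:skel}.
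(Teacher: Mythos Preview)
Your proof is correct and follows essentially the same approach as the paper: both argue the easy inclusion ${\rm Aut}(M)\subseteq{\rm Aut}(\partial M)$ directly, and for the reverse inclusion take $\psi\in{\rm Aut}(\partial M)$, observe $\psi(M)\in\overline{\cal K}(d+1)$ with $\partial(\psi(M))=\partial M$, and invoke the injectivity in Proposition~\ref{P6} to conclude $\psi(M)=M$. You are simply more explicit than the paper about why $V(M)=V(\partial M)$ (via \eqref{eq:skel}) and about reading Proposition~\ref{P6} at the level of labelled complexes.
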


\begin{proof}
Clearly ${\rm Aut}(M)\subseteq {\rm Aut}(\partial M)$. If
$\sigma:V(M)\rightarrow V(M)$ is in ${\rm Aut}(\partial M)$ then
$\sigma(M)\in \overline{\cal K}(d+1)$ and
$\partial(\sigma(M))=\sigma(\partial M)=\partial M$. Therefore by
Proposition \ref{P6}, $\sigma(M)=M$. This implies $\sigma\in {\rm
Aut}(M)$. Therefore, ${\rm Aut}(\partial M)\subseteq {\rm Aut}(M)$
and hence ${\rm Aut}(M)={\rm Aut}(\partial M)$.
\end{proof}

\section{Examples}

\begin{eg}\label{E1} {\rm Let
$V_{21}=\cup_{i=0}^6\{a_i,b_i,c_i\}$ be a set of $21$ elements.
Let the cyclic group $\ZZ_7$ act on $V_{21}$ as $i\cdot a_j =
a_{i+j}$, $i\cdot b_j=b_{i+j}$ and $i\cdot c_j=c_{i+j}$ (additions
being modulo $7$). Consider the pure $5$-dimensional simplicial
complex $A^{5}_{21}$ on the vertex-set $V_{21}$ as follows. Modulo
the group $\ZZ_7$ the facets are
\begin{eqnarray*}
&&\sigma_{0}=a_0a_1a_2b_0b_1c_0, \kappa_{0}=a_1a_2b_0b_1b_2c_0,
\tau_{0}=a_1a_2a_3b_0b_1b_2, \alpha_{0}=a_0a_1b_0b_1c_0c_3, \\
&&\beta_{0}=a_0a_1b_0b_3c_0c_3, \mu_{0}=a_0b_0b_3c_0c_3c_4,
\nu_{0}=a_0a_3b_3c_0c_3c_4, \gamma_{0}=a_3b_3c_0c_3c_4c_6.
\end{eqnarray*}
The full list of $56$ facets can be obtained by applying the group
$\ZZ_7$ to these eight facets. The dual graph of $A^{5}_{21}$ is
the union of two $21$-cycles $C_1=\sigma_0\kappa_0\tau_0 \sigma_1
\kappa_1 \tau_1\cdots \sigma_6\kappa_6\tau_6\sigma_0$, $C_2=
\mu_{0}\nu_{0}\gamma_{0}\mu_{3}\nu_{3}\gamma_{3}\cdots \mu_{4}
\nu_{4} \gamma_{4}\mu_{0}$ and paths $P_i= \sigma_{i}\alpha_{i}
\beta_{i}\mu_{i}$ for $i\in \ZZ_7$. It can be shown that
$A^{5}_{21}$ is a neighborly member of $\overline{\mathcal K}(5)$
(see Lemma \ref{lemma:N411} below). Let $M^{4}_{21} := \partial
A^{5}_{21}$. Then $M^{4}_{21}\in {\cal K}^{\ast}(4)$ and hence, by
Proposition \ref{P2}, $\chi(M^{4}_{21})=-14$. Then by
(\ref{eq:beta1}), $\beta_1(M^{4}_{21})=8$. One can show that
$M^{4}_{21}$ is orientable (by giving an explicit orientation or
using {\tt simpcomp} \cite{simpcomp}) and so, by Proposition
\ref{P3}, $M^{4}_{21}$ triangulates $(S^3\times S^1)^{\#8}$. }
\end{eg}

\begin{eg}\label{E2} {\rm Let $V_{21}$ be the vertex-set with
group $\ZZ_7$ acting on it as in Example \ref{E1}. Consider the
pure $5$-dimensional simplicial complex $B^{5}_{21}$ whose facets
modulo $\ZZ_7$ action described above are
\begin{eqnarray*}
&&\sigma_{0}=a_0a_1a_2b_0b_1c_0, \, \kappa_{0}=a_0a_1a_2b_1b_2c_0,
\,  \tau_{0}=a_0a_1a_2a_3b_1b_2, \,
\alpha_{0}=a_0a_1b_0b_1c_0c_3, \\
&&\beta_{0}=a_0b_0b_1b_3c_0c_3, \, \mu_{0}=a_0b_0b_3c_0c_3c_4, \,
\nu_{0}=a_3b_0b_3c_0c_3c_4, \, \gamma_{0}=a_3b_3c_0c_3c_4c_6.
\end{eqnarray*}
The dual graph of $B^{5}_{21}$ is the same as that of
$A^{5}_{21}$. It can be shown that $B^{5}_{21}$ is a neighborly
member of $\overline{\mathcal K}(5)$ (see Lemma \ref{lemma:N411}
below). Let $N^{4}_{21} := \partial B^{5}_{21}$. Then
$N^{4}_{21}\in {\cal K}^{\ast}(4)$ and hence, by Proposition
\ref{P2}, $\chi(N^{4}_{21}) =-14$. Then by (\ref{eq:beta1}),
$\beta_1(N^{4}_{21}) =8$. Using {\tt simpcomp}, one can check that
$N^{4}_{21}$ is non-orientable and so, by Proposition \ref{P3}, it
triangulates $({\TPSS})^{\#8}$. }
\end{eg}

\begin{eg}\label{E3} {\rm Let
$V_{26}=\cup_{i=0}^{12}\{a_i,b_i\}$ be a set of $26$ elements. The
cyclic group $\ZZ_{13}$ acts on $V_{26}$ as $i\cdot a_j=a_{i+j}$,
$i\cdot b_j=b_{i+j}$ (additions being modulo 13). Consider the
$5$-dimensional pure simplicial complex $B^{5}_{26}$ on the
vertex-set $V_{26}$ whose facets modulo the group $\ZZ_{13}$ are
\begin{eqnarray*}
&&\sigma_{0}=a_0a_{10}a_{11}a_{12}b_9b_{10}, \,
\tau_{0}=a_0a_1a_{10}a_{11}a_{12}b_{10}, \,
\alpha_{0}=a_0a_{11}a_{12}b_5b_9b_{10}, \\
&&\beta_{0}=a_0a_{11}a_{12}b_2b_5b_{10}, \,
\gamma_{0}=a_0a_7a_{12}b_2b_5b_{10}, \,
\mu_{0}=a_7a_{12}b_0b_2b_5b_{10}, \,
\delta_{0}=a_7b_0b_2b_5b_8b_{10}.
\end{eqnarray*}
The full list of $91$ facets can be obtained by applying the group
$\ZZ_{13}$ to these seven facets. The dual graph of $B^{5}_{26}$
is the union of two $26$-cycles $C_1=\sigma_{0}\tau_{0}
\sigma_{1}\tau_{1} \cdots\sigma_{12}\tau_{12}\sigma_{0}$,
$C_2=\mu_{0}\delta_{0}\mu_{8}\delta_{8}\cdots\mu_{5}\delta_{5}\mu_{0}$
and paths $P_i=\sigma_i\alpha_i\beta_i\gamma_i\mu_i$ for $i\in
\ZZ_{13}$. It can be shown that $B^{5}_{26}$ is a neighborly
member of $\overline{\mathcal K}(5)$ (see Lemma \ref{lemma:N411}
below). Let $N^{4}_{26} := \partial B^{5}_{26}$. Then
$N^{4}_{26}\in {\cal K}^{\ast}(4)$ and hence, by Proposition
\ref{P2}, $\chi(N^{4}_{26})=-26$. Then by (\ref{eq:beta1}),
$\beta_1(N^{4}_{26})=14$. Using {\tt simpcomp}, one can check that $N^{4}_{26}$ is
non-orientable and so, by Proposition \ref{P3}, $N^{4}_{26}$
triangulates $(\TPSS)^{\#14}$. }
\end{eg}

\begin{eg}\label{E4} {\rm
Let $V_{41} = \{a_0, a_1,\dots, a_{40}\}$ be a set of 41 elements.
The cyclic group $\ZZ_{41}$ acts on $V_{41}$ as $i\cdot a_j =
a_{i+j}$ (addition is modulo 41).
Consider the pure 5-dimension simplicial complex $A^{5}_{41}$
on the vertex-set $V_{41}$ as follows. Modulo the group $\ZZ_{41}$
its facets are
\begin{eqnarray*}
&&\sigma_{0} = a_{36} a_{37} a_{38} a_{39} a_{40}a_{0}, \,
\alpha_{0} =  a_{36} a_{37} a_{38} a_{39} a_{0} a_{6}, \,
\beta_{0} =  a_{37} a_{38} a_{39} a_{0} a_{6}a_{13}, \\
&& \gamma_{0} = a_{38} a_{39} a_{0} a_{6}a_{13}a_{20}, \,
\delta_{0} =a_{39} a_{0} a_{6}a_{13}a_{20}a_{27}, \, \mu_{0} =
a_{6}a_{13}a_{20}a_{27}a_{34}a_{0}.
\end{eqnarray*}
The full list of 246 facets of $A^{5}_{41}$ may be obtained from
these basic six facets applying the group $\ZZ_{41}$. The dual
graph of $A^{5}_{41}$  is the union of two $41$-cycles $C_1=
\sigma_{0}\sigma_{1}\cdots\sigma_{40}\sigma_{0}$, $C_2= \mu_{0}
\mu_{7}\mu_{14}\cdots\mu_{34}\mu_{0}$ and paths $P_i= \sigma_{i}
\alpha_{i}\beta_{i} \gamma_{i}\delta_{i}\mu_{i}$ for $i\in
\ZZ_{41}$. Then $A^{5}_{41}$ is a neighborly member of
$\overline{{\cal K}}(5)$ (see Lemma \ref{lemma:N411} below). Let
$M^{4}_{41} := \partial A^{5}_{41}$. Then  $M^{4}_{41} \in {\cal
K}^{\ast}(4)$ and hence, by Proposition \ref{P2},
$\chi(M^{4}_{41}) = -82$. Therefore, by (\ref{eq:beta1}),
$\beta_1(M^{4}_{41}) = 1 - \chi(M^{4}_{41})/2 = 42$. One can check
(by giving an explicit orientation or
using {\tt simpcomp}) that $M^{4}_{41}$ is orientable and hence, by Proposition
\ref{P3}, $M^{4}_{41}$ triangulates $(S^{\,3}\!\times
S^1)^{\#42}$.
 }
\end{eg}

For easy reference, we summarize the results of this section in
table below. Notice that $M^{4}_{41}$ admits a vertex-transitive
automorphism group.
\begin{table}[h]
\centering
\begin{tabular}{|l|c|c|c|c|c|c|}
\hline
&&&&&&\\[-4mm]
$M$ & $f_0(M)$ & $\chi(M)$
& $\beta_1(M)$ & $\mathrm{Aut}(M)$ & $f(M)$ & $|M|$ \\[0.5mm]
\hline
&&&&&&\\[-4mm]
$M^{4}_{21}$ & $21$ & $-14$ & $8$ & $\ZZ_7$ &
$(21,210,490,525,210)$ &
$(S^3\times S^1)^{\#8}$ \\[0.5mm]
$N^{4}_{21}$ & $21$ & $-14$ & $8$ & $\ZZ_7$ &
$(21,210,490,525,210)$ & $({\TPSS})^{\#8}$ \\[0.5mm]
$N^{4}_{26}$ & $26$ & $-26$
& $14$ & $\ZZ_{13}$ & $(26,325,780,845,338)$ & $(\TPSS)^{\#14}$ \\[0.5mm]
$M^{4}_{41}$ & $41$ & $-82$ & $42$ & $\ZZ_{41}$ &
$(41,820,2050,2255,902)$ & $(S^3\times S^1)^{\#42}$ \\[0.5mm]
\hline
\end{tabular}
\caption{Summary of results of Section 3} \label{tbl:tbl1}
\end{table}

\section{Construction Details}

Let $X$ be a neighborly member of $\overline{\cal K}(d)$. Then all
vertex-links, and equivalently vertex-stars in $X$ are stacked
balls. By Corollary \ref{cor:C0}, we see that the facets
containing a given vertex $x$ form an $(f_0(X)-d)$-vertex induced
subtree of $\Lambda(X)$. Thus for each vertex, we get a subtree of
$\Lambda(X)$ (namely, the dual graph of $\st{X}{x}$). From the
neighborliness of $X$, it follows that any two of these trees
intersect. Now we invert the question, i.e, given a graph $G$ and
an intersecting family $\cal T$ of induced subtrees of $G$, can we
get a neighborly member of $\overline{\cal K}(d)$? Our next lemma
answers this in affirmative under certain conditions. Given a
graph $G$ and a family ${\cal T}=\{T_i\}_{i\in {\cal I}}$ of
induced subtrees of $G$, we say that $u \in V(G)$ {\em defines}
the subset $\hat{u} =\{i \in {\cal I} : u \in V(T_i)\}$ of $\cal
I$.

\begin{lemma}\label{lemma:construction}
Let $G$ be a graph and ${\cal T} = \{T_i\}_{i=1}^{n}$ be a  family
of $(n-d)$-vertex induced subtrees of $G$, any two of which
intersect. Suppose that {\rm (i)} each vertex of $G$ is in exactly
$d+1$ members of $\cal T$  and {\rm (ii)} for any two vertices
$u\neq v$ of $G$, $u$ and $v$ are together in exactly $d$ members
of $\cal T$ if and only if $uv$ is an edge of $G$. Then the pure
simplicial complex $M$ whose facets are $\hat{u}$, where $u \in
V(G)$, is a neighborly member of $\overline{\cal K}(d)$, with
$\Lambda(M)\cong G$.
\end{lemma}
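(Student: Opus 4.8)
The plan is to show directly that the pure complex $M$ with facets $\{\hat u : u\in V(G)\}$ is a $d$-dimensional weak pseudomanifold whose dual graph is $G$, that it is neighborly, and that every vertex link is a stacked $(d-1)$-ball; by definition this exhibits $M$ as a neighborly member of $\overline{\cal K}(d)$. The vertices of $M$ are the indices $i\in\{1,\dots,n\}$, and for a vertex $i$ the facets of $M$ containing $i$ are exactly those $\hat u$ with $u\in V(T_i)$; so the star of $i$ in $M$ corresponds, facet by facet, to the vertex set $V(T_i)$. Since $|V(T_i)| = n-d$ and (as I will check) distinct facets of $M$ are distinct, $i$ lies in exactly $n-d$ facets.

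First I would establish the basic combinatorial dictionary. For $u\in V(G)$, $|\hat u| = d+1$ by hypothesis (i), so each $\hat u$ is a $d$-simplex and $M$ is pure $d$-dimensional (after checking $u\neq v \Rightarrow \hat u\neq \hat v$: if $\hat u=\hat v$ then $u,v$ lie together in all $d+1$ members of $\cal T$, which by (ii) would force $uv$ to be an edge, but an edge means they lie together in exactly $d$ members — contradiction; so the map $u\mapsto\hat u$ is injective). Next, two facets $\hat u,\hat v$ share a $(d-1)$-face iff $|\hat u\cap\hat v|\ge d$ iff $u,v$ lie together in $\ge d$ members of $\cal T$ iff (by (ii), since they can't lie together in all $d+1$) exactly $d$ members iff $uv\in E(G)$. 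This simultaneously shows (a) that the correspondence $u\mapsto\hat u$ is a graph isomorphism $G\cong\Lambda(M)$ once we know every $(d-1)$-face lies in at most two facets, and (b) that any codimension-one face $\hat u\cap\hat v$ is shared by exactly those two facets — so I must rule out a $(d-1)$-face lying in three facets $\hat u,\hat v,\hat w$. If $\hat u\cap\hat v=\hat u\cap\hat w$ has size $d$, then $u,v,w$ pairwise lie together in $\ge d$ members of $\cal T$, hence are pairwise adjacent in $G$; the common face is indexed by a set $S$ of $d$ indices with $u,v,w\in\bigcap_{i\in S}V(T_i)$, and for each such tree $T_i$ the three vertices $u,v,w$ form a triangle inside the induced subtree $T_i$, which is impossible since a tree has no triangle (here I use that the $T_i$ are \emph{induced} subtrees, so adjacency in $G$ of vertices of $T_i$ is adjacency in $T_i$). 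Hence $M$ is a weak pseudomanifold with $\Lambda(M)\cong G$.

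For neighborliness: given two vertices $i\ne j$ of $M$, by hypothesis the trees $T_i$ and $T_j$ intersect, so there is $u\in V(T_i)\cap V(T_j)$, whence $\{i,j\}\subseteq\hat u$ is an edge of $M$; thus $M$ is $2$-neighborly. Finally, for the vertex links: fix $i$ and consider $\st{M}{i}$. Its facets are exactly $\{\hat u : u\in V(T_i)\}$, and two such facets $\hat u,\hat v$ (with $u,v\in V(T_i)$) meet in a codimension-one face of $\st{M}{i}$ precisely when $uv\in E(G)$, i.e. when $uv\in E(T_i)$ since $T_i$ is induced. Therefore $\Lambda(\st{M}{i})\cong T_i$, a tree, and $f_0(\st{M}{i})$ equals the number of indices appearing in some $\hat u$, $u\in V(T_i)$. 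To invoke Lemma~\ref{lemma:stackedball}(ii) I need $f_0(\st{M}{i}) = f_d(\st{M}{i}) + d = (n-d)+d = n$; but every vertex of $M$ is among these $n$ vertices — indeed for each $j$ pick $u\in V(T_i)\cap V(T_j)$, then $j\in\hat u$ — so $f_0(\st{M}{i}) = n$. Hence $\st{M}{i}$ is a stacked $d$-ball; then $\Lk{M}{i}$, being (combinatorially) the "antistar link", is a stacked $(d-1)$-ball. (Concretely, writing $\st{M}{i} = i * \Lk{M}{i}$ as a cone, Corollary~\ref{cor:C0} gives that $\Lk{M}{i}$ is a stacked $(d-1)$-ball.) This holds for every vertex $i$, so $M\in\overline{\cal K}(d)$, completing the proof.

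The main obstacle is the third-facet argument: showing no $(d-1)$-face of $M$ is contained in three facets. Everything else is bookkeeping with the defining sets $\hat u$, but that step is where the hypothesis that the $T_i$ are \emph{induced} (triangle-free as subtrees) is essential and must be used carefully; it is also what guarantees $M$ is a genuine weak pseudomanifold rather than merely a pure complex with $\Lambda(M)\cong G$ as an abstract graph.
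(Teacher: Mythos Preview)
Your argument follows the same route as the paper's: the no-triangle-in-a-tree argument for the weak pseudomanifold property, neighborliness from pairwise intersection of the $T_i$, and Lemma~\ref{lemma:stackedball} together with Corollary~\ref{cor:C0} for the stacked-ball links. One small slip: your injectivity argument for $u\mapsto\hat u$ misreads condition~(ii) --- if $\hat u=\hat v$ then $u,v$ lie together in $d+1$ trees, and (ii) then says $uv$ is \emph{not} an edge (since ``exactly $d$'' fails), so no contradiction arises that way; the paper itself just writes ``clearly'' at this point. A correct argument for $d\ge 2$: if $\hat u=\hat v$ then for each $i\in\hat u$ the neighbour $w$ of $u$ on the $u$--$v$ path in $T_i$ is also a neighbour of $v$ (since $|\hat v\cap\hat w|=|\hat u\cap\hat w|=d$), so the path is $u\,w\,v$; two such common neighbours in the same $T_i$ would create a $4$-cycle, so the index sets $\hat u\cap\hat w$ (each of size $d$) partition $\hat u$, forcing $d\mid d+1$.
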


\begin{proof}
Let ${S} \subseteq \{1, \dots, n\}$ be of size $d$. We show that
at most two facets of $M$ contain $S$. If possible, let $\hat{u}$,
$\hat{v}$ and $\hat{w}$ be three facets of $M$ that contain $S$.
Then by assumption, $uv$, $uw$ and $vw$ are edges in $G$. Let
$i\in {S}$. Then by definition of $M$, $u$, $v$, $w$ are vertices
of $T_i$. Since $T_i$ is induced subgraph, we conclude that $uv$,
$uw$, $vw$ are edges of $T_i$, which is a contradiction to the
fact that $T_i$ is a tree. Thus $M$ is a $d$-dimensional weak
pseudomanifold. Clearly $u\mapsto \hat{u}$ is an isomorphism
between $G$ and $\Lambda(M)$. Further the conditions on $(G, \cal
T)$ imply that $G$ should be connected. Thus $M$ is a
$d$-pseudomanifold. Since any two members of $\cal T$ intersect,
it follows that $M$ is neighborly. Let $S_i = \st{M}{i}$ be the
star of the vertex $i$ in $M$. Then by construction $\Lambda(S_i)
\cong T_i$ and thus $f_d(S_i) = \#(V(T_i)) = n-d$. Also from the
neighborliness of $M$, $f_0(S_i)=n$. Thus $f_0(S_i)=f_d(S_i)+d$
and hence, by Lemma \ref{lemma:stackedball}, $S_i$ is a stacked
$d$-ball. Therefore, by Corollary \ref{cor:C0}, $\Lk{M}{i}$ is a
stacked $(d-1)$-ball and hence $M$ is a member of $\overline{\cal
K}(d)$.
\end{proof}

We use Lemma \ref{lemma:construction} to construct all the
complexes. Here we present the details of the construction of
$A^{5}_{41}$ and $M^{4}_{41}=\partial A^{5}_{41}$.

\medskip

\noindent {\bf Construction of \boldmath{$A^{5}_{41}$}:} Let $G$
be the union of two $41$-cycles $C_1=u_0u_1\cdots u_{40} u_0$,
$C_2= v_0 v_7 v_{14}\cdots  v_{34}  v_0$ and the paths $P_i=u_i
x_i y_i z_i  w_i  v_i$ for $i\in \ZZ_{41}$. Consider the family of
induced subtrees of $G$ defined by ${\cal T}=\{T_i\}_{i=0}^{40}$,
where $T_i$ is the subtree induced on $G$ by the following $36$
vertices (see Fig \ref{fig:Ti}):
\begin{align*}
u_{i},u_{i+1},\ldots,u_{i+5},  v_{i}, v_{i+7},\ldots, v_{i+35},
 x_{i}, y_{i}, z_{i}, w_{i},
 x_{i+2}, y_{i+2}, z_{i+2}, w_{i+2},
 x_{i+3}, y_{i+3}, \\ z_{i+3},  x_{i+4}, y_{i+4},
 x_{i+5}, w_{i+14},  w_{i+21}, z_{i+21},
 w_{i+28}, z_{i+28}, y_{i+28},  w_{i+35}, z_{i+35},
y_{i+35}, x_{i+35}.
\end{align*}

We show that $(G,{\cal T})$ satisfy the conditions in Lemma
\ref{lemma:construction} for $d=5$. From Figure \ref{fig:Ti}, it
is easily observed that for $i\in \ZZ_{41}$, {\small
\begin{align*} \hat{u}_i=\{i,i-1,i-2,i-3,i-4,i-5\},
\quad & \hat{ x}_i=\{i,i-2,i-3,i-4,i-5,i-35\}, \\
\hat{ y}_i=\{i,i-2,i-3,i-4,i-28,i-35\}, \quad &
\hat{ z}_i=\{i,i-2,i-3,i-21,i-28,i-35\}, \\
\hat{ w}_i=\{i,i-2,i-14,i-21,i-28,i-35\}, \quad & \hat{
v}_i=\{i,i-7,i-14,i-21,i-28,i-35\}.
\end{align*} }
Clearly each vertex of $G$ defines a $6$-subset. Further it can be
seen that $\hat{x}\cap \hat{y}$ is a $5$-element set only for edge
pairs like $(\hat{u}_i,\hat{u}_{i+1})$, $(\hat{ v}_i, \hat{ v}_{i
+7})$, $(\hat{u}_i, \hat{ x}_i)$, $(\hat{ x}_i, \hat{ y}_i)$ etc.
Now we show that $\cal T$ is an intersecting family. First we
notice that
$$
\varphi := (u_0\cdots u_{40})( x_0\cdots  x_{40}) ( y_0\cdots
y_{40})( z_0\cdots  z_{40})( w_0\cdots  w_{40})( v_0\cdots v_{40})
$$
is an automorphism of $G$ and further $\varphi(T_i)=T_{i+1}$ for
$i\in \ZZ_{41}$. Thus we have $T_i=\varphi^i(T_0)$, and so to
prove $\cal T$ to be an intersecting family, it is sufficient to
prove that $T_0$ has non-empty intersection with $T_1,\ldots,
T_{20}$. Clearly $T_1,\ldots,T_5$ intersect $T_0$ in $ u_1,\ldots,
u_5$ respectively; $T_7,T_{14}$ intersect $T_0$ in $ v_7, v_{14}$
respectively. Since $6+35=13+28=20+21=0\ (\text{mod } 41)$, we see
that $T_6$, $T_{13}$, $T_{20}$ intersect $T_0$ in $ v_0$. Since
$8+35=2\ (\text{mod } 41)$ we see that $T_8$ contains $ x_2$,
which also appears in $T_0$. Similarly, $ x_3 \in T_{9}\cap
T_{0}$, $ x_4 \in T_{10}\cap T_{0}$, $ x_5 \in T_{11}\cap T_{0}$,
$ w_{14} \in T_{12}\cap T_{0}$, $ y_2 \in T_{15}\cap T_{0}$, $ y_3
\in T_{16}\cap T_{0}$, $ y_4 \in T_{17}\cap T_{0}$, $ z_{21} \in
T_{18}\cap T_{0}$ and $ z_{21} \in T_{19}\cap T_{0}$. Thus, via
construction in Lemma \ref{lemma:construction} , $(G,{\cal T})$
yields a neighborly member of $\overline{\cal K}(5)$, which we
denote by $A^{5}_{41}$. Finally we note that $\pi \colon i \mapsto
i+1$ is an automorphism of $A^{5}_{41}$ by noticing that
$\pi(\hat{ u}_i)= \hat{ u}_{i+1}$, $\pi(\hat{ x}_i)=\hat{
x}_{i+1}$ etc. This generates the automorphism group $\ZZ_{41}$ of
$A^{5}_{41}$, which indeed is the full automorphism group of
$A^{5}_{41}$ (checked by {\tt simpcomp}).
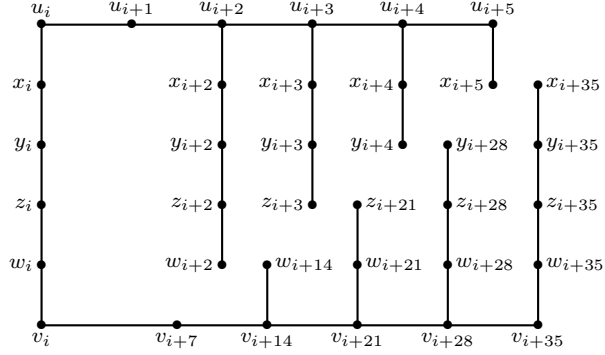
\begin{figure}[tp]
\centering
\begin{tikzpicture}[xscale=1.2,yscale=0.8]
\coordinate (V0) at (0,0); \coordinate (V7) at (1.5,0);
\coordinate (V14) at (2.5,0); \coordinate (V21) at (3.5,0);
\coordinate (V28) at (4.5,0); \coordinate (V35) at (5.5,0);
\coordinate (U0) at (0,5); \coordinate (U1) at (1,5); \coordinate
(U2) at (2,5); \coordinate (U3) at (3,5); \coordinate (U4) at
(4,5); \coordinate (U5) at (5,5); \coordinate (A2) at (2,4);
\coordinate (B2) at (2,3); \coordinate (C2) at (2,2); \coordinate
(D2) at (2,1); \coordinate (A3) at (3,4); \coordinate (B3) at
(3,3); \coordinate (C3) at (3,2); \coordinate (A4) at (4,4);
\coordinate (B4) at (4,3); \coordinate (A5) at (5,4); \coordinate
(D14) at (2.5,1); \coordinate (D21) at (3.5,1); \coordinate (C21)
at (3.5,2); \coordinate (D28) at (4.5,1); \coordinate (C28) at
(4.5,2); \coordinate (B28) at (4.5,3); \coordinate (D35) at
(5.5,1); \coordinate (C35) at (5.5,2); \coordinate (B35) at
(5.5,3); \coordinate (A35) at (5.5,4); \coordinate (A0) at (0,4);
\coordinate (B0) at (0,3); \coordinate (C0) at (0,2); \coordinate
(D0) at (0,1);

\draw [thick] (V0) -- (V35); \draw [thick] (U0) -- (U5); \draw
[thick] (U0) -- (V0); \draw [thick] (U2) -- (D2); \draw [thick]
(U3) -- (C3); \draw [thick] (U4) -- (B4); \draw [thick] (U5) --
(A5); \draw [thick] (V14) -- (D14); \draw [thick] (V21) -- (C21);
\draw [thick] (V28) -- (B28); \draw [thick] (V35) -- (A35);

{\scriptsize \node [above] at (U0) {$u_i$}; \node [above] at (U1)
{$u_{i+1}$}; \node [above] at (U2) {$u_{i+2}$}; \node [above] at
(U3) {$u_{i+3}$}; \node [above] at (U4) {$u_{i+4}$}; \node [above]
at (U5) {$u_{i+5}$};

\node [below] at (V0) {$ v_i$}; \node [below] at (V7) {$
v_{i+7}$}; \node [below] at (V14) {$ v_{i+14}$}; \node [below] at
(V21) {$ v_{i+21}$}; \node [below] at (V28) {$ v_{i+28}$}; \node
[below] at (V35) {$ v_{i+35}$}; \node [left] at (A0) {$ x_i$};
\node [left] at (B0) {$ y_i$}; \node [left] at (C0) {$ z_i$};
\node [left] at (D0) {$ w_i$};

\node [left] at (A2) {$ x_{i+2}$}; \node [left] at (B2) {$
y_{i+2}$}; \node [left] at (C2) {$ z_{i+2}$}; \node [left] at (D2)
{$ w_{i+2}$}; \node [left] at (A3) {$ x_{i+3}$}; \node [left] at
(B3) {$ y_{i+3}$}; \node [left] at (C3) {$ z_{i+3}$}; \node [left]
at (A4) {$ x_{i+4}$}; \node [left] at (B4) {$ y_{i+4}$}; \node
[left] at (A5) {$ x_{i+5}$};

\node [right] at (D14) {$ w_{i+14}$}; \node [right] at (D21) {$
w_{i+21}$}; \node [right] at (C21) {$ z_{i+21}$}; \node [right] at
(D28) {$ w_{i+28}$}; \node [right] at (C28) {$ z_{i+28}$}; \node
[right] at (B28) {$ y_{i+28}$}; \node [right] at (D35) {$
w_{i+35}$}; \node [right] at (C35) {$ z_{i+35}$}; \node [right] at
(B35) {$ y_{i+35}$}; \node [right] at (A35) {$ x_{i+35}$};

\node at (U0) {$\bullet$}; \node at (U1) {$\bullet$};

\node at (U2) {$\bullet$}; \node at  (U3) {$\bullet$};

\node at (U4) {$\bullet$}; \node at  (U5) {$\bullet$};

\node at (V0) {$\bullet$}; \node at  (V7) {$\bullet$};

\node at (V14) {$\bullet$}; \node at  (V21) {$\bullet$};

\node at (V28) {$\bullet$}; \node at  (V35) {$\bullet$};

\node at (A0) {$\bullet$}; \node at  (B0) {$\bullet$};

\node at (C0) {$\bullet$}; \node at  (D0) {$\bullet$};

\node at (A2) {$\bullet$}; \node at  (B2) {$\bullet$};

\node at (C2) {$\bullet$}; \node at  (D2) {$\bullet$};

\node at (A3) {$\bullet$}; \node at  (B3) {$\bullet$};

\node at (C3) {$\bullet$}; \node at  (A4) {$\bullet$};

\node at (B4) {$\bullet$}; \node at  (A5) {$\bullet$};

\node at (D14) {$\bullet$}; \node at  (D21) {$\bullet$};

\node at (C21) {$\bullet$}; \node at  (D28) {$\bullet$};

\node at (C28) {$\bullet$}; \node at  (B28) {$\bullet$};

\node at (D35) {$\bullet$}; \node at  (C35) {$\bullet$};

\node at (B35) {$\bullet$}; \node at  (A35) {$\bullet$};

}
\end{tikzpicture}
\caption{Tree $T_i$ in $G \cong \Lambda(A^{5}_{41})$}
\label{fig:Ti}
\end{figure}

\begin{lemma}\label{lemma:N411}
Let $A^{5}_{21}$, $B^{5}_{21}$, $B^{5}_{26}$, $A^{5}_{41}$,
$M^{4}_{21}$, $N^{4}_{21}$, $N^{4}_{26}$ and $M^{4}_{41}$ be as in
Section $3$. Then
\begin{enumerate}[{\rm (a)}]
\item $A^{5}_{21}, B^{5}_{21}, B^{5}_{26}, A^{5}_{41} \in
\overline{{\cal K}}(5)$, \item ${\rm Aut}(A^{5}_{21})={\rm
Aut}(M^{4}_{21})={\rm Aut}(B^{5}_{21})={\rm Aut}(N^{4}_{21})=\ZZ_7$,
\item ${\rm Aut}(B^{5}_{26})={\rm Aut}(N^{4}_{26})=\ZZ_{13}$, \item
${\rm Aut}(A^{5}_{41}) = {\rm Aut}(M^{4}_{41}) = \ZZ_{41}$.
\end{enumerate}
\end{lemma}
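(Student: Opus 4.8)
The plan is to establish (a) exactly as was done above for $A^{5}_{41}$: for each of $A^{5}_{21}$, $B^{5}_{21}$ and $B^{5}_{26}$ one takes for $G$ the dual graph already described in Examples \ref{E1}, \ref{E2} and \ref{E3} (in each case a union of two cycles together with the connecting paths $P_i$), one writes down the $\ZZ_p$-invariant family ${\cal T}=\{T_i\}$ of induced subtrees whose fundamental member $T_0$ is exhibited in a figure analogous to Figure \ref{fig:Ti}, and one checks that $(G,{\cal T})$ satisfies hypotheses (i) and (ii) of Lemma \ref{lemma:construction} with $d=5$. Concretely, from the picture of $T_0$ one reads off the $6$-subsets $\hat{u}$ defined by the vertices $u$ of $G$; hypothesis (i) is then visible at a glance (each vertex lies in exactly six trees), and hypothesis (ii) amounts to verifying that $\hat{u}\cap\hat{v}$ has size $5$ exactly when $uv$ is an edge of $G$, a finite check which by $\ZZ_p$-equivariance need only be performed for $u$ ranging over a set of orbit representatives. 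Finally, since $T_i=\varphi^i(T_0)$ for the shift automorphism $\varphi$ of $G$, proving that ${\cal T}$ is an intersecting family reduces to exhibiting a common vertex of $T_0$ and $T_i$ for $1\le i\le\lfloor p/2\rfloor$, which one reads off from the explicit vertex lists as in the $A^{5}_{41}$ case. Lemma \ref{lemma:construction} then gives $A^{5}_{21},B^{5}_{21},B^{5}_{26}\in\overline{\cal K}(5)$, and combined with the $A^{5}_{41}$ computation already carried out this proves (a).

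For the automorphism groups of the five-dimensional complexes, note that each construction carries a visible order-$p$ symmetry: the shift $i\mapsto i+1$ on the index set permutes the subsets $\hat{u}$ (because $\varphi$ permutes the trees $T_i$) and hence induces an automorphism of the associated complex, so that $\ZZ_7\le{\rm Aut}(A^{5}_{21})$, $\ZZ_7\le{\rm Aut}(B^{5}_{21})$, $\ZZ_{13}\le{\rm Aut}(B^{5}_{26})$ and $\ZZ_{41}\le{\rm Aut}(A^{5}_{41})$. That these are equalities is confirmed by a direct computation with {\tt simpcomp} \cite{simpcomp} (as already recorded above for $A^{5}_{41}$). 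This yields the asserted values of ${\rm Aut}(A^{5}_{21})$, ${\rm Aut}(B^{5}_{21})$, ${\rm Aut}(B^{5}_{26})$ and ${\rm Aut}(A^{5}_{41})$.

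To pass to the boundary manifolds, observe that $A^{5}_{21},B^{5}_{21},B^{5}_{26},A^{5}_{41}$ all lie in $\overline{\cal K}(5)=\overline{\cal K}(4+1)$, so Corollary \ref{cor:C1} (with $d=4$) applies and gives ${\rm Aut}(A^{5}_{21})={\rm Aut}(\partial A^{5}_{21})={\rm Aut}(M^{4}_{21})$, and similarly ${\rm Aut}(B^{5}_{21})={\rm Aut}(N^{4}_{21})$, ${\rm Aut}(B^{5}_{26})={\rm Aut}(N^{4}_{26})$ and ${\rm Aut}(A^{5}_{41})={\rm Aut}(M^{4}_{41})$. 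Together with the previous paragraph this establishes (b), (c) and (d).

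The only real work is the bookkeeping in (a): checking hypothesis (ii) of Lemma \ref{lemma:construction} for $A^{5}_{21}$, $B^{5}_{21}$ and $B^{5}_{26}$ requires, for each pair of vertices of $G$ up to the $\ZZ_p$-action, counting how many of the trees contain both and comparing against the edge set of $G$, and the intersecting-family verification is of the same nature. This is elementary but best organized around the explicit fundamental tree $T_0$ and the shift $\varphi$ (and cross-checked with {\tt simpcomp}); the one point we do not see how to argue by hand, and for which we rely on the computer, is that ${\rm Aut}$ of each five-dimensional complex is no larger than $\ZZ_p$.
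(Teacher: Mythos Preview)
Your approach is essentially the paper's own: treat $A^{5}_{41}$ as the prototype and defer the remaining verifications to the same combinatorial checks (ultimately cross-checked with {\tt simpcomp}); your explicit invocation of Corollary \ref{cor:C1} to transfer ${\rm Aut}$ from the $5$-complexes to their boundaries is a clean addition that the paper leaves implicit.

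One small imprecision worth fixing: the reduction ``$T_i=\varphi^i(T_0)$, so it suffices to check $T_0\cap T_i$ for $1\le i\le\lfloor p/2\rfloor$'' is only literally correct for $A^{5}_{41}$, where $\ZZ_{41}$ acts transitively on the $41$ trees. For $A^{5}_{21}$, $B^{5}_{21}$ (three $\ZZ_7$-orbits of trees, indexed by $a_i,b_i,c_i$) and $B^{5}_{26}$ (two $\ZZ_{13}$-orbits, indexed by $a_i,b_i$), the shift $\varphi$ does not carry every tree to every other, so the intersecting-family check and hypothesis (ii) must be organized over $\ZZ_p$-orbit representatives of \emph{pairs} of trees, not just translates of a single $T_0$. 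You already acknowledge this for hypothesis (ii); the same caveat applies to the intersection check.
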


\begin{proof} The properties of the complexes follow from the
constructions. As a prototype, we described the construction of
$A^{5}_{41}$. The properties of other complexes, mentioned in the
statement of the lemma and in Table \ref{tbl:tbl1} may be verified
by using a combinatorial topology package such as {\tt simpcomp}
\cite{simpcomp}. For sake of brevity, we omit all the details
here.
\end{proof}

\begin{lemma}\label{lemma:tightness}
Let $M^{4}_{21}$, $N^{4}_{21}$, $N^{4}_{26}$ and $M^{4}_{41}$ be
as in Section $3$. Then
\begin{enumerate}[{\rm (a)}]
\item $M^{4}_{21}$ and $M^{4}_{41}$ are $\mathbb{Q}$-tight.
\item $N^{4}_{21}$ and $N^{4}_{26}$ are $\ZZ_2$-tight.
\item
$M^{4}_{21}$, $N^{4}_{21}$, $N^{4}_{26}$ and $M^{4}_{41}$ are
strongly minimal.
\end{enumerate}
\end{lemma}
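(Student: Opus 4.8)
The plan is to derive all three parts from Propositions \ref{P4} and \ref{P5}, using only the structural information about the four manifolds that has already been assembled. First I would note that Lemma \ref{lemma:N411}(a) places each of $A^{5}_{21}$, $B^{5}_{21}$, $B^{5}_{26}$, $A^{5}_{41}$ in $\overline{\cal K}(5)$, so Proposition \ref{P6} (with $d=4$) shows that the four boundaries $M^{4}_{21}$, $N^{4}_{21}$, $N^{4}_{26}$, $M^{4}_{41}$ all belong to ${\cal K}(4)$. Each of the four $5$-complexes is neighborly (being produced via Lemma \ref{lemma:construction}), and by the skeleton identity (\ref{eq:skel}) the vertex set and $1$-skeleton of a member of $\overline{\cal K}(5)$ agree with those of its boundary; hence each boundary is $2$-neighborly, i.e.\ a member of ${\cal K}^{\ast}(4)$ --- exactly as already recorded in Examples \ref{E1}--\ref{E4}.

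For part (a), $M^{4}_{21}$ and $M^{4}_{41}$ are orientable by Examples \ref{E1} and \ref{E4}, hence $\QQ$-orientable. Since they are $2$-neighborly members of ${\cal K}^{\ast}(4)$ and $d=4\neq 3$, Proposition \ref{P4} gives that both are $\QQ$-tight. For part (b), $N^{4}_{21}$ and $N^{4}_{26}$ are closed connected triangulated $4$-manifolds, so $H_{4}(\,\cdot\,;\ZZ_{2})\cong\ZZ_{2}$ and both are automatically $\ZZ_{2}$-orientable; applying Proposition \ref{P4} again with $\FF=\ZZ_{2}$ (still $d=4\neq 3$) shows that both are $\ZZ_{2}$-tight.

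For part (c), each of the four manifolds is an $\FF$-tight member of ${\cal K}(4)$ for a suitable field $\FF$ --- namely $\FF=\QQ$ for $M^{4}_{21}$ and $M^{4}_{41}$ by part (a), and $\FF=\ZZ_{2}$ for $N^{4}_{21}$ and $N^{4}_{26}$ by part (b). Proposition \ref{P5} then immediately yields that all four are strongly minimal, which completes the argument.

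I do not expect a deep obstacle here: the essential work has been front-loaded into the constructions underlying Lemma \ref{lemma:N411} and into the cited results, Propositions \ref{P4} and \ref{P5}. The only point requiring a moment's care is checking the $\FF$-orientability hypothesis needed to invoke Proposition \ref{P4}: for the two non-orientable examples it is free, since orientability over $\ZZ_{2}$ always holds for a closed manifold, while for $M^{4}_{21}$ and $M^{4}_{41}$ it is precisely the orientability already verified in Examples \ref{E1} and \ref{E4}.
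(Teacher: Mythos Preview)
Your proposal is correct and follows essentially the same route as the paper: both arguments simply record that the four boundaries lie in ${\cal K}^{\ast}(4)$, verify the appropriate $\FF$-orientability (orientability for $M^{4}_{21}$, $M^{4}_{41}$; automatic $\ZZ_2$-orientability for $N^{4}_{21}$, $N^{4}_{26}$), and then invoke Propositions \ref{P4} and \ref{P5}. Your write-up is a bit more explicit about why membership in ${\cal K}^{\ast}(4)$ holds (via Proposition \ref{P6} and the skeleton identity (\ref{eq:skel})) and about $\ZZ_2$-orientability, but the logical structure is identical to the paper's.
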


\begin{proof}
As previously seen $M^{4}_{21}$ and $M^{4}_{41}$ are
triangulations of $(S^3\times S^1)^{\#8}$
and $(S^3\times S^1)^{\#42}$ respectively and are
in ${\cal K}^{\ast}(4)$. By Proposition \ref{P4}, they are
$\mathbb{Q}$-tight.
Similarly $N^{4}_{21}$, $N^{4}_{26}$ are
triangulations of $({\TPSS})^{\#8}$ and $({\TPSS})^{\#14}$
respectively and are in ${\cal K}^{\ast}(4)$. By Proposition
\ref{P4}, they are $\ZZ_2$-tight. By Proposition \ref{P5}, all the
complexes here are strongly minimal.
\end{proof}


\noindent {\bf Acknowledgement:} The authors thank Bhaskar Bagchi
for some valuable comments on an initial version of this article.
The authors thank UGC Centre for Advanced Study for financial
support. The second author thanks IISc Mathematics Initiative for
support.

{\small

 }

\end{document}